\newtheorem{theorem}{Theorem}
\newtheorem{lemma}{Lemma}
\newtheorem{cor}{Corollary}
\newtheorem{defn}{Definition}
\newtheorem{conjecture}{Conjecture}
\begin{document}

\begin{center}
    \Large
    \textbf{Homotopy and Homology at Infinity and at the Boundary}
     
    \vspace{0.4cm}
    \large
    \textbf{Mohammed Barhoush}
    
    \vspace{0.8cm}
    \textbf{Abstract}
\end{center}
In this paper we study the relationship between the homology and homotopy of a space at infinity and at its boundary. Firstly, we prove that if a locally connected, connected, $\delta$-hyperbolic space that is acted upon geometrically by a group has trivial homotopy at infinity then the first Čech homotopy group is trivial. Secondly, we prove that if a hyperbolic group on a finite field has trivial $i^{th}$ homology at infinity then the boundary of the group has trivial $i^{th}$ Steenrod homology. This result turns out to be important in addressing an open problem related to Cannon's conjecture.

\section{Introduction}

Homology and homotopy theories have proven very useful in helping mathematicians classify and understand different spaces. It is why basic homology and homotopy theory has expanded to include new variations such as Čech homology and Steenrod homotopy. In some cases these variations prove easier and more useful to calculate than the original groups. One of the important applications of these groups is to Cannon's conjecture which states:

\begin{conjecture}
(Cannon's conjecture) Let $G$ be a hyperbolic group. If $\partial G\cong S^2$ then $G$ acts geometrically on the hyperbolic space $\mathbb{H}^3$. 
\end{conjecture}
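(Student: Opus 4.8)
The plan is to reduce Cannon's conjecture, by the rigidity theorems of Sullivan, Tukia, Gabai, and Casson--Jungreis, to a statement about the quasiconformal geometry of $\partial G$, and then to attack that statement with the combinatorial machinery of Cannon--Swenson, using the homological results of this paper as structural input. Recall that a group acts geometrically on $\mathbb{H}^3$ exactly when it admits a cocompact, uniformly quasiconformal action on $S^2$, and that by Sullivan's theorem any uniformly quasiconformal action on $S^2$ is quasiconformally conjugate to an action by M\"obius transformations. Since a hyperbolic group $G$ always acts on $\partial G$, equipped with a visual metric, by uniformly quasi-M\"obius homeomorphisms, and (being a convergence action coming from hyperbolicity) cocompactly on triples, it therefore suffices to produce a quasisymmetry from $(\partial G,\ \text{visual metric})$ to the round $2$-sphere: conjugating by such a map turns the quasi-M\"obius action into a uniformly quasiconformal one, which is then M\"obius-conjugate to a cocompact Kleinian group acting on $\mathbb{H}^3$.

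To build that quasisymmetry I would invoke the Bonk--Kleiner characterization of the standard sphere: a metric space homeomorphic to $S^2$ is quasisymmetric to the round $S^2$ if and only if it is Ahlfors $2$-regular and linearly locally contractible. The visual boundary of a hyperbolic group is automatically doubling, linearly connected, and linearly locally contractible, so the whole difficulty concentrates in a single point --- showing that $\partial G$ carries a metric in its quasisymmetry gauge that is Ahlfors $2$-regular, equivalently that the Ahlfors-regular conformal dimension of $\partial G$ equals the topological dimension $2$ and is attained. This is exactly where I would use this paper: the identification of the homology at infinity of $G$ over a finite field with the Steenrod homology of $\partial G$, together with the Bestvina--Mess duality $\check{H}^{\,k}(\partial G)\cong H^{k+1}(G;\mathbb{Z}G)$ in suitable reduced, shifted form and localized versions of the correspondence, pins down the local Čech/Steenrod homology of $\partial G$ at every point and shows $\partial G$ is a cohomology $2$-manifold with the local homology of $S^2$. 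That rules out the pinch-point and Sierpi\'nski-carpet-like local configurations which would otherwise force the conformal dimension above $2$, and it is the ingredient I would feed into the modulus estimates below.

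The third step is to run the Cannon--Swenson combinatorial Riemann mapping program. Starting from the descending sequence of balls in a Cayley graph (or a Rips complex) of $G$, I would construct the induced sequence of shinglings of $\partial G$, estimate the combinatorial moduli of the resulting ring domains, and check the conformality axioms of the combinatorial Riemann mapping theorem --- in effect establishing the combinatorial Loewner property for $\partial G$. The local homological control from the previous step is meant to bound how these moduli behave under refinement. A limit of the shingle metrics then gives a metric in the gauge that is Ahlfors $2$-regular and $G$-equivariant up to bounded distortion; Bonk--Kleiner yields the quasisymmetry to $S^2$, and the first step finishes the proof.

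The hard part --- in fact the crux of Cannon's conjecture --- is the second step: proving that the conformal dimension is attained at $2$, i.e., the combinatorial Loewner property. The homological tools of this paper force $\partial G$ to resemble $S^2$ at the level of local (co)homology, but (co)homology does not see the metric notion of modulus, so passing from ``$\partial G$ has no local cut or branching structure'' to a genuine lower bound on combinatorial modulus is not formal. Controlling the moduli of the ring domains along refinements --- ruling out their degeneration on some exceptional family of annuli --- is precisely the part that remains open, and I expect any complete proof to require genuinely new analytic input there, over and above the homological bookkeeping developed in this paper.
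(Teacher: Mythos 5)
There is a genuine gap --- and in this case it could not be otherwise, because the statement you are asked to prove is Cannon's conjecture itself, which the paper does not prove and does not claim to prove: it is stated as an open conjecture, and the paper's actual contribution (Theorem \ref{2} and its corollary) is only to translate the hypothesis ``vanishing first cohomology over $\mathbb{Z}/2$ at infinity'' in the Beeker--Lazarovich question (Conjecture \ref{open problem}) into the boundary condition $H^{st}_1(\partial G)=0$, reducing that question to Conjecture \ref{mines}. Your proposal is likewise not a proof but a reduction: steps one and three are the standard Sullivan--Tukia--Gabai--Casson--Jungreis rigidity reduction and the Cannon--Swenson / Bonk--Kleiner program, and, as you candidly admit, step two --- showing the Ahlfors-regular conformal dimension of $\partial G$ is attained at $2$, equivalently the combinatorial Loewner property --- is exactly the open core of the conjecture. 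Conceding the crux means the proposal establishes nothing beyond the known state of the art.

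A more specific problem is your claimed use of this paper as ``structural input.'' Under the hypothesis of Cannon's conjecture, $\partial G$ is already assumed homeomorphic to $S^2$, so it is automatically a (co)homology $2$-manifold with the local homology of the sphere; no Steenrod-homology computation, and in particular nothing from Theorem \ref{2}, is needed to rule out pinch points or carpet-like local configurations --- those are excluded by the topological hypothesis itself. The paper's results are aimed at the converse-type situation where one does \emph{not} yet know the boundary is a sphere and wants to detect it from conditions at infinity. Moreover, (co)homological data of this kind carries no information about modulus or the quasisymmetry gauge, so it cannot supply the lower bounds on combinatorial moduli that the Cannon--Swenson axioms require; that is precisely where every known approach stalls, and your sketch stalls there too.
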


Cannon first proposed this conjecture in an attempt to prove the Thurston Hyperbolization Theorem. Even though the Thurston hyperbolization theorem has already been proven, Connan's conjecture remains a highly important open problem in geometric group theory. This is mainly because it would imply that a closed, irreducible, 3-manifold whose fundamental group is hyperbolic is hyperbolic. This result would be a great leap in our understanding of hyperbolic spaces.

Special cases of Cannon's conjecture have been proven. For instance, Connan's conjecture holds true for Coxeter groups. The most relevant cases of Connan's conjecture have been proven B. Beeker and N. Lazarovich in \cite{beeker} and by V. Markovic in \cite{main}. Combining these two recent papers we have the following result:

\begin{theorem}
\label{intro}
Let $G$ be a hyperbolic group then the following are equivalent
\begin{enumerate}
\item
$G$ acts geometrically on $\mathbb{H}^3$.
\item
$\partial G\cong \mathbb{S}^2$ and $G$ contains 'enough' quasi-convex surface subgroups.
\item
$G$ is one ended, has vanishing first cohomology over $\mathbb{Z}/2$ at infinity and contains 'enough' quasi-convex co-dimension 1 surface subgroups.

\end{enumerate}
\end{theorem}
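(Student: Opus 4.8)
The plan is to derive Theorem~\ref{intro} as a synthesis of the main results of \cite{main} and \cite{beeker}, spelling out the implications $(1)\Rightarrow(2)\Rightarrow(3)\Rightarrow(1)$ and checking that the technical hypotheses of the three sources match up. The implication $(1)\Rightarrow(2)$ is the routine direction: a geometric action on $\mathbb{H}^3$ gives $\partial G\cong\partial\mathbb{H}^3\cong\mathbb{S}^2$, and the stabilizers of totally geodesic hyperbolic planes in $\mathbb{H}^3$ furnish quasi-convex surface subgroups in abundance (passing to a finite-index subgroup if one wants these to be honest closed surface groups). The converse $(2)\Rightarrow(1)$ is precisely Markovic's criterion from \cite{main}; so the first concrete task is to quote \cite{main} in exactly the form stated here and to confirm that ``enough quasi-convex surface subgroups'' there means what it means in $(2)$.

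Next I would establish $(2)\Rightarrow(3)$. One-endedness is immediate, since a hyperbolic group whose boundary is connected and has more than one point is one-ended, and $\mathbb{S}^2$ is connected and infinite. For the cohomological clause I would invoke the Bestvina--Mess correspondence between the cohomology at infinity of $G$ over $\mathbb{Z}/2$ and the reduced \v{C}ech cohomology of $\partial G$ over $\mathbb{Z}/2$; under this correspondence the vanishing of the first cohomology at infinity is equivalent to $\check{H}^1(\partial G;\mathbb{Z}/2)=0$, which holds because $\partial G\cong\mathbb{S}^2$. Finally, a quasi-convex surface subgroup $H\le G$ has limit set $\Lambda H$ a topological circle (it is the boundary of the one-ended quasi-convex subgroup $H$), and a circle embedded in $\mathbb{S}^2$ separates it by the Jordan curve theorem, so $H$ is automatically codimension $1$; it then remains to check that the separation property built into ``enough'' in $(2)$ delivers the ``enough'' required by the codimension-$1$ hypothesis of $(3)$.

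The substantive step, and the main obstacle, is $(3)\Rightarrow(1)$ (equivalently $(3)\Rightarrow(2)$), where cohomological and group-theoretic data must be upgraded to the homeomorphism $\partial G\cong\mathbb{S}^2$. The engine here is \cite{beeker}: the quasi-convex codimension-$1$ surface subgroups feed Sageev's construction to produce a system of separating walls whose limit circles ``fill'' $\partial G$; one-endedness rules out global cut points, while vanishing of the first $\mathbb{Z}/2$ cohomology at infinity controls the local cohomology of $\partial G$ and, via Bestvina--Mess, forces $\partial G$ to be a $2$-dimensional $\mathbb{Z}/2$-cohomology manifold; the walls coming from surface subgroups then supply the local Euclidean structure that promotes this to a genuine closed surface, which, being a $\mathbb{Z}/2$-homology sphere, is $\mathbb{S}^2$. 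With $\partial G\cong\mathbb{S}^2$ and enough surface subgroups in hand, Markovic's criterion again closes the loop to $(1)$.

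I expect the genuine difficulties to be: (a) reconciling the two senses of ``enough'' --- Markovic's separation condition on surface subgroups in $(2)$ versus the cubulation-style codimension-$1$ condition in $(3)$ --- so that the implications really chain; (b) ensuring the limit circles of the walls sit tamely enough in $\partial G$ that ``$2$-dimensional cohomology manifold plus walls'' yields a manifold point, with no wild Cantor-set obstruction; and (c) verifying that passing to $\mathbb{Z}/2$ coefficients instead of integral ones costs nothing in the Bestvina--Mess step --- precisely the coefficient-reduction phenomenon that the present paper's second main theorem is designed to make rigorous, which is why that theorem enters this circle of ideas.
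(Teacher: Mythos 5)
The paper offers no argument of its own for Theorem \ref{intro}: it is stated as a direct combination of Markovic \cite{main} (the implication $(2)\Rightarrow(1)$) and Beeker--Lazarovich \cite{beeker} (the passage from the cohomological hypotheses in $(3)$ to the sphere boundary in $(2)$), so your overall strategy --- chain the two cited theorems and fill in the routine implications, watching that the two meanings of ``enough'' match --- is exactly the intended one, and your steps $(2)\Rightarrow(1)$, $(2)\Rightarrow(3)$ (Bestvina--Mess over $\mathbb{Z}/2$ plus the Jordan curve theorem for the limit circles) and $(3)\Rightarrow(2)$ (Sageev-style walls from the codimension-one surface subgroups, as in \cite{beeker}) are consistent with the sources.

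There is, however, a genuine error in your $(1)\Rightarrow(2)$ step. You propose to get the quasi-convex surface subgroups as stabilizers of totally geodesic planes in $\mathbb{H}^3$; but a cocompact Kleinian group in general contains no such subgroups at all --- a generic closed hyperbolic $3$-manifold has no closed immersed totally geodesic surfaces --- and even when some exist they need not be ``enough'' in Markovic's sense, namely that every pair of distinct points of $\partial G\cong\mathbb{S}^2$ is separated by the limit set of a quasi-convex surface subgroup. The correct input is the Kahn--Markovic surface subgroup theorem, which produces quasi-Fuchsian (nearly geodesic, not geodesic) closed surface subgroups whose limit quasi-circles approximate any prescribed round circle and hence separate any prescribed pair of boundary points; this is precisely what makes Markovic's criterion an equivalence rather than a one-way implication. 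Without invoking that theorem the implication $(1)\Rightarrow(2)$, and with it the stated three-way equivalence, does not follow from your argument. A smaller point to make explicit in $(2)\Rightarrow(3)$: deducing that a quasi-convex surface subgroup whose limit circle separates $\partial G$ is codimension one requires the standard correspondence between relative ends of $(G,H)$ and the separation of $\partial G$ by $\Lambda H$, not just the Jordan curve theorem on the boundary.
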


In B. Beeker and N. Lazarovich's paper the following open question was posed at the end.

\begin{conjecture}
\label{open problem}
If $G$ is one ended, has vanishing first cohomology over $\mathbb{Z}/2$ at infinity and any two points on the boundary of $G$ are separated by a Jordan curve then $\partial G\approx\mathbb{S}^2$.
\end{conjecture}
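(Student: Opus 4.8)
The plan is to prove that $\partial G$ is homeomorphic to $S^2$ by feeding the cohomological hypothesis into the homology-at-infinity/Steenrod-homology correspondence established in this paper and then invoking a classical separation characterization of $S^2$. Write $X=\partial G$. Since $G$ is hyperbolic, $X$ is a finite-dimensional, compact, connected, locally connected metrizable space---a non-degenerate Peano continuum (Bestvina--Mess, Bowditch)---and since $G$ is one ended, $X$ has no global cut point (Swarup). The first task is to rewrite the hypothesis ``$G$ has vanishing first cohomology over $\mathbb{Z}/2$ at infinity''. Over the field $\mathbb{Z}/2$, vanishing of cohomology at infinity in a fixed degree is equivalent to vanishing of homology at infinity in that degree, so the second main theorem of this paper applies and shows that the first Steenrod homology of $X$ over $\mathbb{Z}/2$ vanishes; dualizing once more over $\mathbb{Z}/2$, this says exactly that $\check H^1(X;\mathbb{Z}/2)=0$.

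Next I would determine $\dim X$. The third hypothesis furnishes a Jordan curve $C\subset X$ together with two points of $X$ lying in distinct components of $X\setminus C$, so $C$ separates $X$; by standard dimension theory a closed set separating a Peano continuum has covering dimension at least $\dim X-1$, and $\dim C=1$, whence $\dim X\le 2$. Conversely $\dim X\ge 2$: if $\dim X\le 1$, then every Jordan curve in $X$ would be a retract of $X$, so $\check H^1(X;\mathbb{Z}/2)=0$ would force $X$ to contain no Jordan curve at all, contradicting the third hypothesis. Hence $\dim X=2$.

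The heart of the argument is to upgrade the hypotheses to the two conditions in Kline's sphere characterization: that every Jordan curve in $X$ separates $X$, and that no pair of points separates $X$. The second follows from $\check H^1(X;\mathbb{Z}/2)=0$ together with the absence of a global cut point: if a pair $\{a,b\}$ separated $X$, then each component of $X\setminus\{a,b\}$ would have both $a$ and $b$ in its closure (else a single point would separate $X$), and a Mayer--Vietoris argument---writing $X$ as the union of the closure of one component of $X\setminus\{a,b\}$ with the closure of the union of the remaining ones, both connected and meeting in $\{a,b\}$---exhibits the reduced zeroth \v{C}ech cohomology of $\{a,b\}$, a copy of $\mathbb{Z}/2$, as injecting into $\check H^1(X;\mathbb{Z}/2)$, which is absurd. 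For the first condition one uses the full strength of the third hypothesis together with $\dim X=2$ and $\check H^1(X;\mathbb{Z}/2)=0$: a non-separating Jordan curve in a $2$-dimensional Peano continuum produces, via an Alexander/\v{C}ech-duality argument, a nonzero class in $\check H^1(X;\mathbb{Z}/2)$---the $\mathbb{Z}/2$ coefficients treating one-sided and two-sided circles uniformly---while the hypothesis that \emph{every} pair of points is separated by some Jordan curve is what governs the local structure of $X$ and excludes the pathological $2$-dimensional continua for which such a duality statement could fail. With both conditions in hand, Kline's characterization of $S^2$ among non-degenerate Peano continua yields $\partial G=X\approx S^2$.

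The step I expect to be the main obstacle is the verification of the first Kline condition---equivalently, showing that the pairwise Jordan-curve hypothesis forces $X$ to behave, cohomologically and locally, like a $2$-manifold rather than a wild $2$-dimensional continuum---since it is only then that $\check H^1(X;\mathbb{Z}/2)=0$ pins down the homeomorphism type as $S^2$. An essentially equivalent route would be to recognize $X$ directly as a generalized (homology) $2$-manifold, hence a closed surface, and then to conclude from $\check H^1(X;\mathbb{Z}/2)=0$ that this surface is $S^2$. Either way the technical core is the taming of $X$, and this is exactly where the homology-at-infinity theorem of the present paper---which is what makes the input $\check H^1(X;\mathbb{Z}/2)=0$ available from the data of the conjecture---does its essential work.
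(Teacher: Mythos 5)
You should first note that the statement you were asked to prove is not proved in the paper at all: it is an open conjecture (posed by Beeker--Lazarovich), and the paper's only contribution to it is a reduction, namely that Theorem \ref{2} (trivial $i^{th}$ homology at infinity over a finite field implies trivial $i^{th}$ Steenrod homology of $\partial G$), combined with the asserted passage from cohomology at infinity over $\mathbb{Z}/2$ to homology at infinity, gives $H^{st}_1(\partial G;\mathbb{Z}/2)=0$, so that Conjecture \ref{open problem} would follow from the purely boundary-level Conjecture \ref{mines}, which the paper explicitly leaves open (suggesting only that one might adapt Papazoglou's plane-characterization argument). Your opening step --- converting the hypothesis at infinity into a vanishing statement on $\partial G$ via the paper's second main theorem --- is exactly this reduction, so up to that point you track the paper.

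Beyond that point there is a genuine gap, and it sits precisely where the open problem lives. The heart of your argument is the verification of the Kline conditions, above all that \emph{every} Jordan curve in $X=\partial G$ separates $X$. Your proposed mechanism --- ``a non-separating Jordan curve in a $2$-dimensional Peano continuum produces, via an Alexander/\v{C}ech-duality argument, a nonzero class in $\check H^1(X;\mathbb{Z}/2)$'' --- is not available: Alexander-type duality needs the ambient space to be (co)homologically a $2$-manifold, which is essentially the conclusion you are trying to reach, and there are $2$-dimensional Peano continua with vanishing first \v{C}ech cohomology containing non-separating simple closed curves. You acknowledge that the pairwise Jordan-curve hypothesis must somehow ``exclude the pathological continua,'' but no argument is offered, and supplying one is exactly the unresolved content of Conjecture \ref{mines}. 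Two smaller points: the step from $H^{st}_1(X;\mathbb{Z}/2)=0$ to $\check H^1(X;\mathbb{Z}/2)=0$ is not a formal dualization (the universal-coefficient/$\mathrm{lim}^1$ sequence relating Steenrod homology and \v{C}ech cohomology of a compact metric space does not let you dualize a vanishing statement without further argument), and the equivalence of cohomology and homology at infinity over $\mathbb{Z}/2$ is also asserted in the paper only via a reference to a lemma that never appears, so it deserves justification rather than a one-line appeal. In sum, your proposal is a reasonable strategy outline, but what it leaves unproven coincides with what the paper itself leaves as an open conjecture, so it does not constitute a proof of the statement.
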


One of the results proved in this paper (Corollary \ref{SH}) tells us that the vanishing first cohomology over $\mathbb{Z}/2$ at infinity can be implies vanishing first Steenrod homology of the boundary. This is useful because to translate all the conditions of the conjecture to boundary conditions which should be easier to deal with. In other words with the results of this paper we have shown that Conjecture \ref{open problem} is implied by the following conjecture. 

\begin{conjecture}
\label{mines}
If $X=\partial G$ is a connected space such that any two points are separated by a Jordan curve, and $H^{st}_1(X)=0$ then $X\approx\mathbb{S}^2$.
\end{conjecture}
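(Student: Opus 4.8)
\textbf{Towards Conjecture \ref{mines}.} The plan is to reduce the statement to a classical characterization of the $2$-sphere among Peano continua -- a Bing--Kline ``sphere characterization'': a metric continuum with more than one point is homeomorphic to $\mathbb{S}^2$ precisely when it is locally connected, is separated by each of its simple closed curves, and is separated by no arc (equivalently, has no local cut point). So the proof should split into (a) verifying that $X$ is a Peano continuum of this kind, (b) showing that every Jordan curve in $X$ separates $X$ -- this is where the hypothesis $H^{st}_1(X)=0$ is consumed -- and (c) showing that no arc (equivalently, no pair of points) separates $X$ -- this is where ``any two points are separated by a Jordan curve'' does its work. An alternative to (b)+(c) is to show directly that $X$ is a $2$-dimensional homology manifold, invoke the classical fact that such a space is a topological surface, and then pin down the surface as $\mathbb{S}^2$ using $H^{st}_1(X)=0$ (the only closed surface with vanishing first homology); the obstacles described below migrate to that route but do not disappear.

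For (a): since $X=\partial G$ is nonempty and connected, $G$ is one-ended, and by the combined work of Bestvina--Mess, Bowditch and Swarup the boundary of a one-ended hyperbolic group is a finite-dimensional, locally connected metric continuum with no global cut point. Hence $X$ is a finite-dimensional Peano continuum without cut points, and Steenrod--Sitnikov homology of $X$ is well behaved: writing $X=\varprojlim X_i$ as an inverse limit of finite complexes, the Milnor exact sequence $0\to{\varprojlim}^1H_2(X_i)\to H^{st}_1(X)\to\check H_1(X)\to 0$ -- whose ${\varprojlim}^1$ term vanishes for field coefficients, the relevant groups being finite-dimensional -- gives $\check H_1(X;\mathbb{Z}/2)=0$, and then $\check H^1(X;\mathbb{Z}/2)=0$ by the universal coefficient theorem for \v{C}ech theory over a field on a compactum. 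Corollary \ref{SH} is exactly what feeds this vanishing in from the ``cohomology at infinity'' side.

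For (b): suppose some Jordan curve $J\subset X$ fails to separate $X$, so that $X\setminus J$ is open, connected and locally connected. Using local connectedness, pick $p\in J$ and a small arc through $p$ meeting $J$ only at $p$ whose two endpoints $a,b$ lie on the two local sides of $J$, then join $a$ to $b$ inside $X\setminus J$; this produces a loop $\gamma$ in $X$ meeting $J$ ``once.'' The idea is that ``$\mathbb{Z}/2$-linking with $J$'' is then a well-defined nonzero class in $\check H^1(X;\mathbb{Z}/2)$: a consistent $\mathbb{Z}/2$-labelling of the two local sides of $J$ cannot extend to a function on $X\setminus J$ when the latter is connected, and this obstruction is the class in question, contradicting the vanishing from (a). (If $J$ is one-sided, replace it by the boundary circle of a M\"{o}bius neighbourhood, which is two-sided, and argue as before.) \emph{I expect this step to be the main obstacle.} Since $X$ is only a Peano continuum -- not a manifold, and in general not an ANR -- one cannot triangulate or use transversality; the linking argument has to be carried out at the \v{C}ech level on suitable open covers, and the comparison maps relating singular, \v{C}ech and Steenrod--Sitnikov homology of $X$ must be controlled. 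This is essentially why the statement is hard, and I would expect the argument to proceed via a duality statement for the pair $(X,J)$ or via the \v{C}ech fundamental pro-group of $X$.

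For (c): suppose a pair of points $\{x,y\}$ separates $X$, say $X\setminus\{x,y\}=\bigsqcup_iU_i$ with each $U_i$ open and connected; then $\partial U_i\subseteq\{x,y\}$, so each closure $\overline{U_i}$ is a connected set whose only possible intersection with a Jordan curve avoiding $\{x,y\}$ is empty. A short case analysis on the possible $\partial U_i$ shows that, \emph{unless} there is a single complementary domain $U_m$ with $\partial U_m=\{x,y\}$ into which every Jordan curve separating $x$ from $y$ is forced, one of $x,y$ must be a cut point of $X$, contradicting (a); the residual case -- where one must look for a separating Jordan curve inside a single surface-like domain -- is handled by the same circle of ideas as (b), and is part of the difficulty. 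Granting (a), (b) and (c), the Bing--Kline characterization gives $X\approx\mathbb{S}^2$.
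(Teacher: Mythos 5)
First, a point of comparison: the statement you are proving is not proved in the paper at all --- it is stated as Conjecture \ref{mines}, and the paper's only remark on how one might attack it is the suggestion that Papasoglu's proof of the plane-characterization theorem (where the hypothesis is $\pi_1(X)=0$ rather than $H^{st}_1(X)=0$) might be adapted. So there is no proof in the paper to measure your argument against; what you have written is an attack on an open problem. Your route --- reduce to Bing's Kline sphere characterization (\cite{bing} in the paper's bibliography): a nondegenerate Peano continuum is $\mathbb{S}^2$ if every simple closed curve separates it and no pair of points does --- is a sensible and genuinely different strategy from the one the paper gestures at, and your step (a) is essentially fine: $\partial G$ of a one-ended hyperbolic group is a finite-dimensional Peano continuum without global cut points, and with finite (e.g.\ $\mathbb{Z}/2$) coefficients the $\varprojlim^1$ term in the Milnor sequence vanishes, so $H^{st}_1=0$ does give $\check H_1=0$.

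However, the proposal is not a proof, and the gaps sit exactly where the content of the conjecture lies. In (b) you must show that $\check H_1(X;\mathbb{Z}/2)=0$ forces every Jordan curve $J\subset X$ to separate $X$, and your sketch presupposes manifold-like local structure that $X$ simply need not have: a Jordan curve in a Peano continuum has no ``two local sides,'' no annular or M\"obius neighbourhood, and no transversality, so the ``$\mathbb{Z}/2$-linking number of a loop with $J$'' is not a defined object; making it one at the \v{C}ech level (say via some duality for the pair $(X,J)$ or a Mayer--Vietoris argument for $X\setminus J$) is precisely the hard analytic-topological step, and you explicitly leave it open. Step (c) has the same character: the hypothesis ``any two points are separated by a Jordan curve'' does not obviously exclude separating pairs or local cut points (boundaries of one-ended hyperbolic groups can have local cut points in general), your case analysis is only sketched, and its ``residual case'' is deferred back to the unproven step (b). So what you have is a plausible reduction scheme with the two essential implications --- homological vanishing $\Rightarrow$ every Jordan curve separates, and the Jordan-curve hypothesis $\Rightarrow$ no separating pair --- still missing; until those are supplied the conjecture remains open, which is consistent with the paper treating it as such.
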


In fact this is a similar result to the following theorem by P. Papazoglou \cite{Papa}.

\begin{theorem}
Let $X$ be a locally compact, geodesic metric space. If $X$ is 1-ended, simply-connected, and any two points are separated by a f-line then $X$ is homeomorphic to the plane.
\end{theorem}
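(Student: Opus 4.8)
The plan is to show that the one-point compactification $\hat X = X \cup \{\infty\}$ is a $2$-sphere and then delete the point at infinity. First I would record the point-set consequences of the hypotheses. A locally compact geodesic space is proper (Hopf--Rinow), hence separable with compact closed balls; and since a geodesic from a point $x$ to a nearby point stays inside a small ball about $x$, the open metric balls are path-connected and form a neighbourhood basis, so $X$ is connected, locally connected and locally path-connected. One-endedness (together with simple connectivity, which is what lets us absorb the bounded complementary components of a large compactum into a larger one) makes $\hat X$ locally connected at $\infty$ as well, so $\hat X$ is a metrizable Peano continuum.

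The core of the argument is to extract from the f-line hypothesis the classical separation axioms that characterise $S^2$ among Peano continua: no point and no arc separates, while every simple closed curve separates. The f-line hypothesis is the source of the last axiom --- given a simple closed curve $J\subset\hat X$, the f-lines passing near points of $J$ force $J$ to disconnect $\hat X$ --- and it also drives the first two, since a separating arc or point could be ``straddled'' by an f-line in a way that produces a third complementary component, contradicting the defining property of an f-line that its complement has exactly two pieces. Carrying this out --- using properness to localise, local connectedness to pass between topological and metric separation, and simple connectivity to control which subsets bound --- yields that $\hat X$ satisfies the hypotheses of the classical (Zippin--Wilder) characterisation of the $2$-sphere, so $\hat X \cong S^2$. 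Equivalently one can argue locally: small metric spheres $S(x,\varepsilon)$ are circles because their points are separated by arcs cut out of nearby f-lines, so $X$ is a surface, and then one-endedness and non-compactness exclude all closed surfaces as well as every surface-with-boundary (the only one-ended, non-compact, simply-connected candidate with boundary being the closed half-plane, in which no f-line with two-piece complement can separate two distinct boundary points). Either way $X = \hat X\setminus\{\infty\} \cong S^2\setminus\{\mathrm{pt}\} \cong \mathbb{R}^2$, so $X$ is homeomorphic to the plane.

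The main obstacle is exactly this core step, and for two intertwined reasons. First, f-lines are unbounded, genuinely coarse objects and ``separation by an f-line'' is a statement about the global complement $X\setminus\ell$, whereas the axioms to be verified concern arbitrary, often small, simple closed curves and arcs; the delicate part is to make the coarse separation manifest locally, so that the f-lines running near a given curve or point really do cut the relevant region into the correct number of components. Second, nothing in the hypotheses overtly bounds the topological dimension of $X$, so one must separately establish that $X$ is at most $2$-dimensional --- morally a coarse Alexander-duality statement, since cutting every pair of points with a $1$-dimensional f-line is a coarse codimension-one condition --- and one probably needs a genuine planarity input, such as Claytor's characterisation of planar Peano continua, rather than a bare dimension count. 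By comparison the remaining hypotheses play routine roles: local compactness and geodesicity give properness and local connectedness, one-endedness makes $\hat X$ tame at infinity and rules out $S^2$ and the half-plane, and simple connectivity governs the bounding behaviour of arcs and circles.
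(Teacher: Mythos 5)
This statement is quoted in the paper only as background --- it is Papasoglu's theorem, cited from \cite{Papa} --- and the paper gives no proof of it, so there is no internal argument to compare yours against; I can only assess your proposal on its own terms. As written it is a strategy outline rather than a proof, and the decisive steps are exactly the ones you defer. The entire content of the theorem is the passage from the coarse, global hypothesis ``any two points are separated by an f-line'' (a statement about complements $X\setminus\ell$ of unbounded sets) to the local separation axioms of the Zippin--Bing--Wilder characterisation (every simple closed curve in $\hat X$ separates, no point or arc does) together with a bound on the dimension, and you explicitly list both of these as unresolved obstacles rather than proving them. Nothing in the sketch shows how an f-line passing ``near'' a given small Jordan curve $J$ forces $J$ itself to separate, nor why a separating point or arc would contradict the two-piece property of f-lines; indeed you never state the definition of an f-line, so the properties being invoked (exactly two complementary pieces, behaviour under localisation) are not even pinned down. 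Asserting that planarity ``probably needs'' Claytor's theorem is likewise a placeholder, not an argument.

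There are also smaller unproved assertions in the reduction itself. Local connectedness of $\hat X$ at $\infty$ is not a formal consequence of one-endedness plus simple connectivity --- it is a semistability-type condition on the complements of compacta and needs a genuine argument before $\hat X$ can be called a Peano continuum and the sphere characterisation applied; similarly, the alternative ``local'' route (small metric spheres $S(x,\varepsilon)$ are circles, hence $X$ is a surface) assumes precisely the kind of local control over f-lines that constitutes the main difficulty. So the proposal identifies a plausible framework but leaves the theorem's core unestablished; to make it a proof you would need to carry out the local separation and dimension arguments, which is where all the work in the actual (cited) proof lies.
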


Every condition is satisfied by $G$ but the critical difference is that in Papazoglou's theorem we have that $\pi_1(X)=0$ while in the conjecture we have $H^{st}_1(X)=0$. There is a strong connection between homotopy and homology. This can be seen in Hurewicz theorem (page 338 in \cite{Hatcher}) which states that the first homology group is the abelianization of the first homotopy group. Therefore it is possible that we can modify Papazoglou's proof and obtain a proof for Conjecture \ref{mines} and thus a proof for the open problem proposed by B. Beeker and N. Lazarovich.

This paper is structured as follows. An overview of the different variants of homotopy and homology groups is given in Section \ref{HT}. Afterwards in Section \ref{CH}, it is shown that the natural map from the first Čech homotopy to the first homotopy at infinity is injective. Finally in Section \ref{SH}, it is shown that if the homology at infinity is trivial then the Steenrod homology of the boundary is trivial. 
\section{Preliminaries: Homology and Homotopy at Infinity}
\label{HT}

In this section we will give a brief over view of Čech, Steenrod and locally finite homology theories (based on \cite{remarks on steenrod}) as well as Čech homotopy theory (based on \cite{cech homotopy}). This will prepare the reader for the theorems in the next sections. 

\subsubsection{Simplical homology}

Let $X$ be a $\Delta$-complex. We define $C_n(X)$ to be the abelian group with basis consisting of the $n$-simplices in $X$. Any element in $C_n(X)$ is called an $n$-chain and can be written as a finite sum of $n$-simplices. 

For any $n$-simplex $s^n=[v_0,...,v_n]$, we denote the  $(n-1)$-simplex boundary of $s^n$ that is made up of every vertex excluding $v_i$ by $s^n|_{[v_0,..,\hat{v}_i,..,v_n]}$.

We can define the $n$-boundary homomorphism $\partial_n :C_n\rightarrow C_{n-1}$ that sends an $n$-simplex $s^n$ to its boundary through the following formula. 
$$\partial_ns^n=\sum_{i=1}^n(-1)^is^i|_{[v_1,..,\hat{v}_i,..,v_n]}$$
This defines the homomorphism since the $n$-simplices form a basis of $C_n(X)$. It is easy to show that $\partial_n \circ \partial_{n+1}=0$, i.e. $Im(\partial_{n+1})<Ker(\partial_n )$.

We now define the $n^{th}$ homology group as
$$H_n(X)=Ker(\partial_n )/Im(\partial_{n+1})$$
We call elements of $Ker(\partial_n )$ $n$-cycles and elements of $Im(\partial_{n+1})$ $n$-boundaries.

We can generalize homology groups for any space $X$ by simply replacing $n$-simplices with continuous maps from a $n$-simplex to $X$. The resulting groups are called singular homology groups.

\subsubsection{Locally Finite Homology}

Note that the $n$-chains defined above must be finite sums. However this means that the homology group are not giving us information about the infinite sums of simplices. This turns out to be a problem as many results only hold true when we take into account the infinite chains (for instance Theorem \ref{steenrod}). This motivated the development of locally finite homology.

\begin{defn}
Let $X$ be a locally compact space with the restriction that every compact set $K\subset X$ meets the image of only finitely many simplices from any chain. We define the \textit{locally finite singular homology} of $X$, $H^{LF}_*(X)$, to be the homology theory that includes infinite chains. In other words, an $n$-chain in $C_n^{LF}(X)$ can be an infinite sum of $n$-simplices. 
\end{defn}

\subsubsection{Čech Homology}

We will now briefly introduce the notion of Čech homology. Let $\mathcal{U}=\{U_{\alpha}\}$ be an open cover of a space $X$ where $\alpha$ is some indexing set. We define the \textit{nerve} $\mathcal{N(U)}$ to be the simplical complex made up of a vertex for each $U_{i}\in \mathcal{U}$ and an $(n-1)$-simplex spanned by $n$ vertices $u_{i_1},...,u_{i_n}$ whenever $\cap_{1\leq k\leq n} U_{i_k}$ is non-empty. 

A cover $\{U_{\alpha \in A}\}$ is said to be a \textit{refinement} of another cover $\{V_{\beta \in B}\}$ if for every $\beta\in B$ there exists $\alpha \in A$ such that $U_{\beta}\subset V_{\alpha}$. We get an ordering on the set of open covers of $X$ where $\mathcal{U}_1<\mathcal{U}_2 $ if and only if $\mathcal{U}_2$ is a refinement of $\mathcal{U}_1$. It is easy to show that this set is a directed quasi-ordered set which means that
\begin{enumerate}
\item
$\mathcal{U}<\mathcal{U}$ 
\item
$\mathcal{U}_1<\mathcal{U}_2<\mathcal{U}_3$ entails that $\mathcal{U}_1<\mathcal{U}$ 
\item
For any two covers $\mathcal{U}_1,\mathcal{U}_2$ there exists a cover $\mathcal{U}_3$ such that $\mathcal{U}_1<\mathcal{U}_3$ and $\mathcal{U}_2<\mathcal{U}_3$.
\end{enumerate} 

Furthermore, it can show that the inverse limit in terms of larger and larger refinements is well defined and independent of the sequence of covers. 

Now let $\mathcal{V}>\mathcal{U}$ then there is a natural simplical map $\mathcal{N(V)}\rightarrow \mathcal{N(U)}$ which is well defined up to homotopy. This map is not unique however the induced map on the homology groups is unique.

\begin{defn}
We define the \textit{$i^{th}$ Čech cohomology }of $X$ to be $lim_{\mathcal{U}} \ H(\mathcal{N(U)},G)$ where the limit is for larger and larger refinements $\mathcal{U}$ of $X$. 
\end{defn}

\subsubsection{Steenrod Homology}

The dual of Čech cohomology is called Steenrod homology which we will now define.

\begin{defn}
A \textit{regular map} $f$ from a complex $A$ to a space $X$ is a map that sends vertices in $A$ to $X$ such that for any $\epsilon>0$, all but finitely many simplices in $A$ have their vertices sent onto sets of diameter $<\epsilon$. 
\end{defn}

\begin{defn}
A \textit{regular $n$-chain} is a tuple $(A,f,s^n)$ where $A$ is a complex, $f$ is a regular function from $A$ to $X$, and $s^n$ is a locally finite $n$-chain in $A$. If $s^n$ is a cycle then we call the tuple a regular $n$-cycle.
\end{defn}

Two regular $n$-cycles $(A_1,f_1,s^n_1)$ and $(A_2,f_2,s^n_2)$ are called homologous if there exists a regular $(n+1)$-cycle $(A,f,s^{n+1})$ such that $A_1$ and $A_2$ are closed subcomplexes of $A$, $f$ agrees with $f_1$ on $A_1$ and with $f_2$ on $A_2$, and $\partial s^{n+1}=\partial s^n_1-\partial s^n_2$. It is clear now how the regular cycles can be used to create homology groups.

\begin{defn}
We define the $n^{th}$ Steenrod homology of $X$, denoted by $H^{st}_n(X)$, as the homology group of \textit{regular $n$-cycles} in $X$. 
\end{defn}

\subsubsection{Cech Homotopy}

One of the interesting things about homotopy is its strong relation to homology. This relation can evidently be seen in the following theorem by Hurewicz (page 338 in \cite{Hatcher}).

\begin{theorem}
Let $X$ be a simply connected space. Then 
$$H_n(X)\cong \pi_n(X)$$
when $\pi_i(X)=0$ for all $i<n$.
\end{theorem}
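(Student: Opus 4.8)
The plan is to reduce the statement to a computation on a convenient CW model and then compare homology and homotopy cell by cell; throughout I assume $n\geq 2$, the case $n=1$ being vacuous under the simple-connectivity hypothesis. \emph{Step 1 (reduction to a good CW complex).} Since singular homology and all homotopy groups are invariant under weak homotopy equivalence, and every space admits a CW approximation, I may replace $X$ by a CW complex. Then, because $X$ is path-connected and $\pi_i(X)=0$ for all $i<n$, the usual construction of a CW approximation by attaching cells to successively kill homotopy groups produces a CW complex $Y$, weakly equivalent to $X$, with a single $0$-cell and \emph{no cells in dimensions} $1,\dots,n-1$. From here on I work with $Y$ and write $Y^{(k)}$ for its $k$-skeleton; in particular $Y^{(n-1)}$ is a single point.

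\emph{Step 2 (the two sides).} On the homology side, the cellular chain complex of $Y$ satisfies $C_i(Y)=0$ for $0<i<n$, so $\partial_n\colon C_n(Y)\to C_{n-1}(Y)=0$ is the zero map and hence $H_n(Y)=C_n(Y)/\operatorname{Im}\partial_{n+1}$. On the homotopy side, cellular approximation gives $\pi_n(Y)\cong\pi_n(Y^{(n+1)})$ (cells of dimension $>n+1$ do not affect $\pi_n$), and $\pi_n(Y^{(n+1)})$ is obtained from $\pi_n(Y^{(n)})$ by quotienting by the subgroup generated by the homotopy classes $[\varphi_\alpha]$ of the attaching maps $\varphi_\alpha\colon S^n\to Y^{(n)}$ of the $(n+1)$-cells; this subgroup is automatically normal, and here $\pi_n(Y^{(n)})$ is abelian since $n\geq 2$. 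Because $Y^{(n-1)}$ is a point, $\pi_n(Y^{(n)})=\pi_n(Y^{(n)},Y^{(n-1)})$, and the standard theorem on relative homotopy groups of a CW pair obtained by attaching $n$-cells identifies this with the free abelian group on the $n$-cells of $Y$, i.e.\ with $C_n(Y)$. Under that identification $[\varphi_\alpha]$ corresponds exactly to the cellular boundary $\partial_{n+1}e^{n+1}_\alpha$ (this is essentially the content of the cellular boundary formula), so $\pi_n(Y)\cong C_n(Y)/\operatorname{Im}\partial_{n+1}=H_n(Y)$.

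\emph{Step 3 (naturality).} To see that the isomorphism is canonical and not an accident of the chosen model, I would verify that the composite constructed above coincides with the Hurewicz homomorphism $h\colon\pi_n(X)\to H_n(X)$; this follows from naturality of $h$ applied to the characteristic maps of the cells of $Y$ together with the base case $h\colon\pi_n(S^n)\xrightarrow{\ \sim\ }H_n(S^n)$, and it lets one transport the isomorphism back along the weak equivalence $Y\to X$.

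\emph{Main obstacle.} The genuinely substantive inputs are the identification $\pi_n(Y^{(n)},Y^{(n-1)})\cong C_n(Y)$ — which rests on a Hurewicz-free analysis of the homotopy of a relative CW pair built by attaching $n$-cells, and for $n=2$ on the fact that a wedge of $2$-spheres has $\pi_2$ free abelian on its spheres — and the compatibility of the attaching-map classes $[\varphi_\alpha]$ with the cellular boundary map, which requires unwinding the cellular boundary formula; everything else is bookkeeping with weak equivalences and skeleta. An alternative route, which I would keep in reserve, is to induct on $n$ using the Serre spectral sequence of the path–loop fibration $\Omega X\to PX\to X$ together with $\pi_n(X)\cong\pi_{n-1}(\Omega X)$, but the CW argument above is more elementary and self-contained given the tools already set up in this section.
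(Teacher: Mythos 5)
This statement is the classical Hurewicz theorem, which the paper does not prove at all: it is quoted as background, with a citation to page 338 of \cite{Hatcher}, so there is no in-paper argument to compare against. Your proposal is essentially the standard proof found in that reference: replace $X$ by a CW model with one $0$-cell and no cells in dimensions $1,\dots,n-1$, identify $\pi_n(Y^{(n)},Y^{(n-1)})$ with the cellular chain group $C_n(Y)$, observe that attaching the $(n+1)$-cells quotients $\pi_n(Y^{(n)})$ by exactly the classes corresponding to $\operatorname{Im}\partial_{n+1}$, and check via naturality that the resulting isomorphism is the Hurewicz map. The sketch is correct, and you have honestly flagged the two genuinely substantive inputs (the free-abelian computation of $\pi_n$ of a wedge of $n$-spheres, which needs homotopy excision or the Freudenthal theorem rather than a purely formal argument, and the matching of attaching-map classes with the cellular boundary formula); a complete write-up would have to supply those, but as a blind reconstruction of the cited theorem's proof it is the right argument and matches the standard route.
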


This theorem only holds for Čech homology groups under certain restrictions like local compactness. We want to find a group that is more closely related to the Čech homology group than singular homotopy groups. There is more than one way to define Čech homotopy but we use the following definition.

\begin{defn}
We define the $n^{th}$ Čech homotopy group of $X$ by
$$\hat{\pi}_n(X)=lim_{\mathcal{U}} \ \pi_n(\mathcal{N(U)})$$
where the limit is taken larger and larger refinements (similar to Čech cohomology).
\end{defn}

\section{Homotopy at Infinity and Čech Homotopy}
\label{CH}
In this section we will prove the following theorem. This part of the paper is not relevant to Cannon's conjecture or Conjecture \ref{open problem}. 

\begin{theorem}
\label{extra}
Let $X$ be a locally connected, connected, $\delta$-hyperbolic space that is acted upon geometrically by a group $G$. If the first homotopy group at infinity of $X$ is trivial then the first Čech homotopy group of the boundary $\partial X$ is trivial.
\end{theorem}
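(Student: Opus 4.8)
The plan is to exploit the well-known quasi-isometry invariance of the boundary together with the comparison between the "space at infinity" of $X$ and neighborhoods of the boundary $\partial X$ inside a suitable compactification $\bar X = X \cup \partial X$. First I would fix a basepoint $x_0 \in X$ and recall the standard picture: for a proper $\delta$-hyperbolic space, the complements $X \setminus B(x_0,r)$ form a cofinal system of "neighborhoods of infinity," and the inverse system of their (pro-)homotopy groups is precisely what computes $\pi_1$ at infinity. Triviality of $\pi_1$ at infinity means this pro-group is pro-trivial: for every $r$ there is $R \ge r$ so that a loop in $X \setminus B(x_0,R)$ is null-homotopic in $X \setminus B(x_0,r)$. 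Local connectedness of $X$ (hence, via the geometric action, a mild local connectivity of the compactification) is what lets me pass between arbitrary open covers of $\partial X$ and the "annular" neighborhoods coming from metric balls.

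Second, I would set up the comparison of inverse systems. The Čech homotopy group $\hat\pi_1(\partial X) = \varprojlim_{\mathcal U} \pi_1(\mathcal N(\mathcal U))$ is computed by a cofinal subsystem of covers: the covers $\mathcal U_r$ of $\partial X$ by "shadows" (traces on $\partial X$ of the connected components of $X \setminus B(x_0,r)$, or of $\epsilon$-balls in a visual metric). The key geometric lemma to prove is that the nerve $\mathcal N(\mathcal U_r)$ is homotopy equivalent — or at least pro-equivalent as $r \to \infty$ — to the corresponding neighborhood of infinity $X \setminus B(x_0,r)$, using local connectedness to guarantee that intersections of shadows are connected (or have controlled homotopy type) so that the nerve faithfully records the fundamental group. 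Once this identification of inverse systems is in place, pro-triviality of $\{\pi_1(X\setminus B(x_0,r))\}$ transports directly to pro-triviality of $\{\pi_1(\mathcal N(\mathcal U_r))\}$, whose inverse limit is $\hat\pi_1(\partial X)$; hence $\hat\pi_1(\partial X) = 0$.

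Third, I would need the geometric action to do two things: (i) guarantee that $X$ is proper (cocompactness plus properness of the action, or one simply assumes $X$ proper as is standard in this setting), so that the balls $B(x_0,r)$ are compact and the ends/boundary theory applies; and (ii) ensure that the boundary $\partial X$ is compact and metrizable with a visual metric, so that finite covers by small balls are cofinal among all covers — this is exactly what makes the Čech construction well-behaved and lets the limit be taken over the countable cofinal family $\{\mathcal U_r\}$. With compactness I can also upgrade the pro-triviality statement: a surjective-on-$\varprojlim$ argument (Mittag-Leffler-type, which holds here because the bonding maps are eventually trivial, not merely eventually something) shows $\varprojlim \pi_1(\mathcal N(\mathcal U_r))$ is the trivial group rather than merely having trivial "derived" part.

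The main obstacle I anticipate is the nerve lemma step: proving that $\mathcal N(\mathcal U_r)$ genuinely captures $\pi_1$ of the neighborhood of infinity. Local connectedness of $X$ gives connectedness of the pieces $X \setminus B(x_0,r)$ (at least their relevant components), but to apply a nerve theorem one wants intersections of cover elements to be connected and simply connected, which is not automatic — it may only hold after passing to a further refinement or after thickening. I expect to handle this by not insisting on an honest homotopy equivalence at each level, but instead arguing at the level of pro-$\pi_1$: showing the two inverse systems $\{\pi_1(\mathcal N(\mathcal U_r))\}$ and $\{\pi_1(X\setminus B(x_0,r))\}$ are pro-isomorphic via a zig-zag of maps, which is weaker than a levelwise equivalence and is all that is needed to conclude that one limit vanishes when the other pro-group is pro-trivial. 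The bookkeeping of these zig-zags and checking cofinality of the shadow covers is the technical heart of the argument.
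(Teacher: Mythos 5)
Your overall strategy --- compare the Čech system of covers of $\partial X$ with the inverse system of complements of balls in $X$, and transport fillings from one side to the other --- is the same skeleton as the paper's argument, but the step you defer as ``the technical heart'' is precisely the entire content of the paper's proof, and your plan for it has a genuine gap. To kill a loop in a nerve $\mathcal{N}(\mathcal{U}_\epsilon)$, you must take the disk filling (provided by triviality of $\pi_1$ at infinity) of the corresponding loop far out in $X$ and convert it into a nullhomotopy inside the nerve of an $\epsilon$-cover of the boundary. You propose to obtain this from a nerve-theorem or pro-equivalence between $\{\pi_1(\mathcal{N}(\mathcal{U}_r))\}$ and $\{\pi_1(X\setminus B(x_0,r))\}$, but you give no mechanism for it, and as you yourself note a levelwise nerve lemma is unavailable because intersections of shadows have no controlled homotopy type. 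The paper supplies the missing mechanism concretely: by Lemma 3.1 of \cite{BM} every point of the filling lies within a uniform distance $C$ of a geodesic ray from the basepoint, and by the divergence estimate of Lemma \ref{1} two rays that pass within $10C$ of each other at a sufficiently late time are $\epsilon$-close in the visual metric; projecting the vertices of the filling onto such rays then yields a triangulated disk in $\mathcal{N}(\mathcal{U}_\epsilon)$ bounding the original loop (the $\epsilon$-subdivision device of Lemma \ref{subdivision}). Without some substitute for this projection-to-rays argument, your pro-comparison is an assertion rather than a proof.

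Moreover, the full pro-isomorphism you aim for is both stronger than needed and doubtful at this level of generality: Conner--Fischer \cite{Conner}, cited in the paper, obtain an isomorphism between the fundamental group of the boundary and the fundamental group at infinity only under extra hypotheses (existence of a universal cover), and merely an injection in the one-dimensional case, so one should not expect the two inverse systems to be pro-isomorphic for an arbitrary $\delta$-hyperbolic $X$ as in the theorem. Only a one-directional comparison is required: a map sending boundary-nerve loops to loops near infinity (rays evaluated at time $t$, joined by geodesics), together with a way to pull fillings back to the nerve. That pull-back is where all of the hyperbolic geometry (visibility of rays and exponential divergence in the visual metric) enters, and it is exactly the part your proposal leaves open.
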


\begin{defn}
The first homotopy at infinity of $X$ based on the geodesic ray $w$ is defined as
$$\pi_1^{\infty}(X,w)=\lim_{i\rightarrow \infty}\pi_1(X\setminus \overline{B}(i),w(i+1))$$
where $\overline{B}(i)$ is the closed ball of radius $i$ around $w(0)$.
\end{defn} 

It can be equivalently defined by replacing the balls with any sequence of compact sets ordered by inclusion but we use the above definition for simplicity.

More explicitly, we have a sequence

$$\pi_1(X\setminus \overline{B}(1),w(2))\xrightarrow{i_1}\pi_1(X\setminus \overline{B}(2),w(3))\xrightarrow{i_2}....$$

where $i_j$ is the composition of the inclusion map $\pi_1(X\setminus \overline{B}(k),w(k+1))\rightarrow \pi_1(X\setminus \overline{B}(k-1),w(k+1))$ and the isomorphism $\pi_1(X\setminus \overline{B}(k-1),w(k+1))\rightarrow \pi_1(X\setminus \overline{B}(k-1),w(k))$ which simply slides the base-point of the loops from $w(k+1)$ to $w(k)$ along $w$.
  
G. Conner and H. Fisher proved a similar result \cite{Conner} to Theorem \ref{extra}.

\begin{theorem}
Let $X$ be a geodesic negatively curved space. Then there is a natural homomorphism between the fundamental group of the boundary of $X$ to its fundamental group at infinity. This homomorphism is an isomorphism when $X$ admits a universal cover and is an injection when $X$ is one-dimensional.
\end{theorem}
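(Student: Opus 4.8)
The plan is to factor the desired homomorphism through the \v{C}ech fundamental group of the boundary, $\hat{\pi}_1(\partial X)$, and then read off each special case from a known comparison between ordinary and \v{C}ech homotopy. Throughout I would take $X$ to be proper (so that $\partial X$ is a compact metric space and $\overline{X}=X\cup\partial X$ is a compactification) and fix the basepoint $p=w(0)$; I write $\pi_1^\infty(X)$ for the fundamental group at infinity, i.e. the limit of the system of groups $\pi_1(X\setminus\overline{B}(n))$ under the inclusion-induced bonding maps $\pi_1(X\setminus\overline{B}(n))\to\pi_1(X\setminus\overline{B}(n-1))$.

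First I would record the canonical map $\kappa:\pi_1(\partial X)\to\hat{\pi}_1(\partial X)$, which exists for any space: given a based loop $\ell$ in $\partial X$ and any open cover $\mathcal{U}$, subdividing $\ell$ below the Lebesgue number of $\mathcal{U}$ represents it by an edge-loop in the nerve $\mathcal{N(U)}$, and passing to finer covers yields a compatible family, i.e. an element of $\lim_{\mathcal{U}}\pi_1(\mathcal{N(U)})=\hat{\pi}_1(\partial X)$, independent of the chosen subdivision and of the homotopy representative. The geometric heart is then to identify $\hat{\pi}_1(\partial X)$ with $\pi_1^\infty(X)$. Using $\delta$-hyperbolicity, the radial projection sending a point of $X\setminus\overline{B}(n)$ to the endpoint of a geodesic ray from $p$ through it is well defined up to the fellow-traveling error $O(\delta)$, and conversely a choice of geodesic rays $p\to\xi$ gives a map $\partial X\to S(n)\subset X\setminus\overline{B}(n-1)$, where $S(n)$ is the metric sphere of radius $n$. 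These correspondences should exhibit a pro-isomorphism between the inverse system $\{X\setminus\overline{B}(n)\}_n$ and the inverse system of nerves $\{\mathcal{N(U)}\}$ of the boundary; applying $\pi_1$ and passing to limits yields an isomorphism $\Theta:\hat{\pi}_1(\partial X)\xrightarrow{\ \cong\ }\pi_1^\infty(X)$. The natural homomorphism is then $\Phi:=\Theta\circ\kappa$, and its naturality (basepoint independence, and $G$-equivariance when a geometric action is present) follows because the projection is coarsely equivariant.

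Since $\Theta$ is an isomorphism in all cases, the two assertions reduce to properties of $\kappa$. When $X$ admits a universal cover, I would argue that $\partial X$ is locally path-connected and semilocally simply connected, so that the canonical comparison map is an isomorphism (for a locally path-connected, semilocally simply connected compactum one has $\pi_1\cong\lim_{\mathcal{U}}\pi_1(\mathcal{N(U)})$); hence $\kappa$, and therefore $\Phi$, is an isomorphism. When $X$ is one-dimensional the boundary is a one-dimensional compactum, equivalently the complements $X\setminus\overline{B}(n)$ are graphs and every $\pi_1(\mathcal{N(U)})$ is free; by the injectivity theorem for one-dimensional spaces (Eda--Kawamura, Fischer--Zastrow) the map $\kappa$ into the inverse limit of these free groups is injective, and so is $\Phi$.

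The step I expect to be the main obstacle is the geometric identification $\Theta$: making the radial-projection correspondence precise despite the non-uniqueness of geodesic rays, and verifying that the at-infinity system and the nerve system of $\partial X$ are genuinely pro-isomorphic, rather than merely having abstractly isomorphic limits, so that $\Theta$ is well defined and natural. A secondary point requiring care is checking that the hypotheses on $X$ really transfer to the local-connectivity and dimension properties of $\partial X$ needed to invoke the comparison theorems for $\kappa$; if the hypothesis ``admits a universal cover'' does not by itself force $\partial X$ to be semilocally simply connected, the surjectivity half of the isomorphism case would have to be established directly from the geometry instead.
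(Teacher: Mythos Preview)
The paper does not prove this theorem; it is quoted as a result of Conner and Fischer, with a pointer to \cite{Conner}. The only content the paper supplies is a one-line description of the natural homomorphism: a loop $\gamma$ in $\partial X$ is sent to the sequence $\{\gamma_1,\gamma_2,\ldots\}$, where $\gamma_i$ is the trace of (the cone on) $\gamma$ on the metric sphere $S_i(x_0)$. This is a direct radial construction and does not pass through the \v{C}ech fundamental group.

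Your route is therefore different in kind. You factor $\Phi$ as $\Theta\circ\kappa$ with $\Theta:\hat{\pi}_1(\partial X)\to\pi_1^\infty(X)$ asserted to be an isomorphism in general. That assertion is strictly stronger than anything the paper proves---Theorem~\ref{extra} only gives the implication ``$\pi_1^\infty(X)$ trivial $\Rightarrow$ $\hat{\pi}_1(\partial X)$ trivial'', and even that under an additional geometric-action hypothesis---so $\Theta$ is carrying essentially all of the weight and you have only sketched why the sphere/nerve systems should be pro-isomorphic. Conner and Fischer's actual argument builds the map directly and analyses it without this detour, so your factorisation buys conceptual clarity (reducing everything to properties of $\kappa$) at the cost of having to establish a harder intermediate isomorphism.

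There is also a real gap in how you handle the two special cases. You attempt to deduce that $\partial X$ is semilocally simply connected from ``$X$ admits a universal cover'', and that $\partial X$ is a one-dimensional compactum from ``$X$ is one-dimensional''; neither implication is valid in general, and you already flag the first as suspect. In fact the hypotheses in Conner--Fischer are placed on the boundary, not on $X$: the isomorphism case assumes $\partial X$ admits a universal cover, and the injectivity case assumes $\partial X$ is one-dimensional. The paper's transcription has dropped the ``$\partial$''. With the conditions restored to $\partial X$, your appeal to the $\pi_1$--$\hat\pi_1$ comparison theorem and to the one-dimensional injectivity results becomes legitimate; as written, the transfer of hypotheses from $X$ to $\partial X$ is unjustified and the argument does not go through.
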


Let $\gamma\in \pi_1(\partial G)$ with base point $x_0$. The natural homomorphism referred to in the theorem is the following
$$\gamma \rightarrow \{\gamma_1,\gamma_2,...\}$$
where $\gamma_i$ is the intersection of $\gamma$ with $S_i(x_0)$. The limit of such a sequence is an element of the fundamental group at infinity (more details in \cite{Conner})

Ofcourse a loop in $\hat{\pi}_1(\partial X)$ is made up of a discrete points so it will map to descrete points on $X$ but we can connect these points with geodesic and get loops as well.
This gives us a similar map from $\hat{\pi}_1(\partial X)$ to $\pi_1^{\infty}(X)$. Theorem \ref{extra} will allow us to show that for any hyperbolic space $X$, this map is injective. 

We define $\mathcal{U}_{\epsilon}=\{U_{\alpha}\}$ to be the open cover where $U_{\alpha}=B_{\epsilon}(u_{\alpha})$ for some $u_{\alpha}\in X$. Note that a loop in $\mathcal{N(U_{\epsilon})}$ is made up of a set of points $\{u_1,...,u_n\}$ such that $U_i\cap U_{i+1}\neq \emptyset$.

\begin{defn}
\label{weird}
If $L=\{u_1,..,u_n\}$ is a loop in $\mathcal{N(U_{\epsilon})}$. We say that a set of points $S$, where $\{u_1,..,u_n\}\subset S\subset X$, is an \textit{$\epsilon$-subdivision} of $L$ if there exists $ \delta>0$ and a subset $D\subset (\mathbb{D}^2)^{(0)}$ such that 
\begin{enumerate}
\item
Every ball of radius $\delta$ in $\mathbb{D}^2$ contains a point $d\in D$.
\item
There exists a bijective map $\phi :D\rightarrow S$ such that for any $d_1,d_2\in D$

$$ d(d_1,d_2)\leq 10\delta \implies d(\phi(d_1),d(\phi(d_2))<\epsilon$$
\item 
$$\phi^{-1}(L)\subset \partial \mathbb{D}^2$$
\end{enumerate}
\end{defn}

\begin{lemma}
\label{subdivision}
Let $\mathcal{N(U_{\epsilon})}$ be the nerve of a space $X$ and let $L=\{u_1,...,u_n\}$ be a loop in the nerve. If there exists a $\epsilon$-subdivision of $L$ then $L$ is contractible. 
\end{lemma}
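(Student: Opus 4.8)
The plan is to use the $\epsilon$-subdivision to explicitly build a nullhomotopy of the loop $L$ inside the nerve $\mathcal{N}(\mathcal{U}_\epsilon)$. The idea is that the bijection $\phi : D \to S$ turns the point set $D \subset (\mathbb{D}^2)^{(0)}$ into a ``triangulated disk'' whose $1$-skeleton can be mapped simplicially into the nerve, and since a disk is contractible, pushing that triangulation into the nerve gives a filling of $L$. Concretely, first I would use condition~(1) — that every $\delta$-ball in $\mathbb{D}^2$ meets $D$ — together with the fact that $\mathbb{D}^2$ is a compact, simply connected surface to produce a finite triangulation $T$ of $\mathbb{D}^2$ with vertex set exactly $D$ (or a refinement thereof contained in $D$), in which every edge has length at most $10\delta$. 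This is a standard meshing argument: take a fine enough simplicial subdivision of $\mathbb{D}^2$ so that every closed $2$-simplex has diameter $<\delta$, and in each such small simplex select a nearby vertex of $D$; because the $\delta$-net condition guarantees $D$ is sufficiently dense, adjacent small simplices pick vertices within $10\delta$ of one another, and one obtains a bona fide triangulation of the disk with $1$-skeleton edges of length $\le 10\delta$.

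Next I would transport this triangulation through $\phi$. By condition~(2), whenever $d_1 d_2$ is an edge of $T$ we have $d(\phi(d_1),\phi(d_2)) < \epsilon$, so the balls $B_\epsilon(\phi(d_1))$ and $B_\epsilon(\phi(d_2))$ intersect (both contain the midpoint region, or at least $\phi(d_1)\in B_\epsilon(\phi(d_2))$), hence $\phi(d_1)$ and $\phi(d_2)$ span an edge of $\mathcal{N}(\mathcal{U}_\epsilon)$. Thus $\phi$ induces a simplicial map $\Phi$ from the $1$-skeleton of $T$ into $\mathcal{N}(\mathcal{U}_\epsilon)$ — and in fact, after possibly passing to a barycentric subdivision of $T$ so that triangles also have small enough image diameter, $\Phi$ extends to a simplicial map on all of $T$, i.e. a map $\overline{\mathbb{D}}{}^2 \to |\mathcal{N}(\mathcal{U}_\epsilon)|$. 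By condition~(3), $\phi^{-1}(L) \subset \partial \mathbb{D}^2$, so $\Phi$ restricted to $\partial \mathbb{D}^2$ is a loop that traverses exactly the vertices $u_1,\dots,u_n$ of $L$ in order (up to reparametrization of the boundary circle and insertion of extra subdivision points along the boundary edges, which only subdivides the loop and does not change its homotopy class in the nerve). Since $\Phi$ is defined on the whole disk, this boundary loop is nullhomotopic in $|\mathcal{N}(\mathcal{U}_\epsilon)|$, which is exactly the statement that $L$ is contractible.

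The main obstacle, I expect, is the meshing step: producing a genuine triangulation of $\mathbb{D}^2$ whose vertices lie in $D$ and whose edges respect the $10\delta$ bound, while also ensuring the boundary of that triangulation meets $D$ in precisely the preimage $\phi^{-1}(L)$ arranged in the right cyclic order along $\partial\mathbb{D}^2$. The first two conditions in Definition~\ref{weird} are tailored to make this work — the $\delta$-net condition gives density and the $10\delta \Rightarrow \epsilon$ condition gives that nearby net points map to overlapping balls — but one has to be a little careful that the triangulation can be chosen compatibly with the prescribed boundary data, which may require first fixing the boundary subdivision (the points of $\phi^{-1}(L)$ together with enough interpolating points of $D \cap \partial\mathbb{D}^2$) and then triangulating the interior relative to that. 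A secondary technical point is making the $2$-simplices' images small enough that the map extends over the triangles and not merely the edges; this is handled by one or two rounds of barycentric subdivision combined with condition~(2), at the cost of enlarging the relevant constant from $10\delta$ to a fixed multiple thereof, which is harmless. Everything else is bookkeeping: the induced map on $\pi_1$ is standard, and the identification of $\Phi|_{\partial\mathbb{D}^2}$ with $L$ up to homotopy follows because subdividing a simplicial loop does not change its class.
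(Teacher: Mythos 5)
Your proposal follows essentially the same route as the paper's proof: triangulate the disk with vertices in the net $D$ and edges of length at most $10\delta$, push the triangulation through $\phi$ so that condition (2) turns edges — and also triangles, since three centers that are pairwise within $\epsilon$ automatically give a nonempty triple intersection of their $\epsilon$-balls — into simplices of $\mathcal{N}(\mathcal{U}_{\epsilon})$, and read off a filling of $L$ from the image of the disk. Your meshing step is actually more careful than the paper's (which simply joins all pairs of points of $D$ at distance at most $10\delta$ and asserts this partitions the disk into triangles), and your proposed barycentric subdivision to handle the $2$-simplices is unnecessary for the reason just noted.
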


\begin{proof}
Connect any two points in $D$ that are a distance at most $10\delta$ apart with a line. This would partition $D$ into triangles. This is because any ball $B_{10\delta}$ in $\mathbb{D}^2$ must contain at least 3 points and every point is adjacent to every other point which creates a triangle subdivision in $B_{10\delta}$.  Thus we get a triangle subdivision of $D$. Now if $d_1,d_2\in D$ are adjacent then $d(\phi(d_1),d(\phi(d_2))<\epsilon$ so $\phi(d_1)$ and $\phi(d_2)$ form a 1-simplex. In other words triangles are sent to triangles. Note that $L$ is in the image of $\phi$ so $\phi(D)$ gives us a subdivision of $L$ into triangles. But every triangle will span a 2-simplex in $\mathcal{N(U_{\epsilon})}$ since each of the vertices are within a distance of $\epsilon$. Therefore $L$ is contractible. 
\end{proof}

Note that the proof still works if we replace a disk with a shape homeomorphic to a disk. 

For any ray $r\in \partial X$ define $r(t)$ to be the point on the ray that is a distance $t$ from its base point. The following lemma will be useful in the proof of Theorem \ref{extra}.

\begin{lemma}
\label{1}
Let $r,r'$ be two geodesic rays in $X$ with base point $x_0\in X$. If for $t'>\delta log_2(10C)+1+log(\epsilon)$ we have $d(r(t'),r'(t'))< 10C$ then $d_a(r,r')< \epsilon$. 
\end{lemma}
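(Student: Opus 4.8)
The plan is to bypass the geometry inside $X$ and reduce everything to a lower bound on the Gromov product $(r\mid r')_{x_0}$ of the two boundary points determined by $r$ and $r'$. Such a bound is essentially immediate at parameter $t'$ from the hypothesis, it propagates out to infinity because Gromov products along geodesic rays in a hyperbolic space are almost non-decreasing, and the estimate $d_a(r,r')<\epsilon$ then follows from the definition of the visual metric. I will use that $X$ is $\delta$-hyperbolic through the four-point inequality $(p\mid q)_o\ge\min\{(p\mid r)_o,(r\mid q)_o\}-\delta$, and that $d_a$ is comparable, up to a fixed multiplicative constant, to $a^{-(\cdot\mid\cdot)_{x_0}}$ on $\partial X$.

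First, directly from the hypothesis, $2(r(t')\mid r'(t'))_{x_0}=2t'-d(r(t'),r'(t'))>2t'-10C$, so $(r(t')\mid r'(t'))_{x_0}>t'-5C$. Next I would show that $t\mapsto(r(t)\mid r'(t))_{x_0}$ is non-decreasing up to an additive error $2\delta$; this is a short computation combining the four-point inequality with the identity $(r(s)\mid r(t))_{x_0}=\min\{s,t\}$, which holds because $r$, resp. $r'$, is a geodesic ray emanating from $x_0$. One further application of the four-point inequality then gives $(r(i)\mid r'(j))_{x_0}>t'-5C-3\delta$ for all $i,j\ge t'$, and feeding the sequences $(r(i))_i$ and $(r'(j))_j$ into the definition of the Gromov product of the two endpoints at infinity yields $(r\mid r')_{x_0}\ge t'-5C-3\delta$. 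The comparison for $d_a$ then gives $d_a(r,r')\lesssim a^{-(t'-5C-3\delta)}$, whose right-hand side is $<\epsilon$ as soon as $t'$ exceeds $5C+3\delta+\log_a(1/\epsilon)$ plus the constant hidden in $\lesssim$; this is a threshold of the same shape as the one in the statement, with the terms $\delta\log_2(10C)$ and $\log(1/\epsilon)$ there accounting for the comparison constant and the $\epsilon$-dependence once the normalization of $d_a$ (the value of $a$ and the comparison constant, both governed by $\delta$ and $C$) is unwound.

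I expect the one genuinely subtle point to be this propagation from parameter $t'$ out to the boundary. Since $r$ and $r'$ need not remain close beyond parameter $t'$, one cannot simply evaluate the boundary Gromov product at a single large $t$; instead one must combine the almost-monotonicity with the four-point inequality to control the $\liminf$ over all pairs $(i,j)\to(\infty,\infty)$ that appears in its definition. Everything else is routine: the one-line identity $(r(s)\mid r(t))_{x_0}=\min\{s,t\}$, and then tracking the constants $\delta$, $C$ and $a$ through the comparison between $d_a$ and $a^{-(\cdot\mid\cdot)_{x_0}}$ to reach the precise threshold asserted in the lemma.
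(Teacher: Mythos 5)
Your core mechanism is sound: the identity $(r(s)\mid r(t))_{x_0}=\min\{s,t\}$ for rays issuing from the basepoint, the $2\delta$-almost-monotonicity of $t\mapsto(r(t)\mid r'(t))_{x_0}$ via the four-point inequality, the passage to the boundary Gromov product by taking the sequences $r(i),r'(j)$, and the comparison $d_a\asymp a^{-(\cdot\mid\cdot)_{x_0}}$ are all correct, and together they do yield a statement of the form: if $d(r(t'),r'(t'))<10C$ and $t'$ exceeds roughly $5C+3\delta+\log_a(1/\epsilon)$ plus a comparison constant, then $d_a(r,r')<\epsilon$. This is a genuinely different route from the paper, which does not mention Gromov products at all: it invokes an exponential divergence estimate (Proposition 1.25 of the cited minicourse notes) saying that once two rays from $x_0$ are $\delta$ apart at time $t_0$, their distance at time $t>t_0$ exceeds $2^{(t-t_0-1)/\delta}$; from $d(r(t'),r'(t'))<10C$ it deduces that the divergence time satisfies $t_0\ge t'-\delta\log_2(10C)-1$, and then reads off $d_a(r,r')$ directly from $t_0$ (the paper implicitly uses the divergence-time model of the visual metric, $d_a\approx e^{-t_0}$; its own write-up has sign/typo issues such as $e^{t_0}$ and $\log(\epsilon)$).

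The genuine gap is in your last claim, that your threshold matches the one in the statement ``once the normalization of $d_a$ is unwound.'' It does not: your loss from the hypothesis is $5C$, linear in $C$, because the crude estimate $2(r(t')\mid r'(t'))_{x_0}>2t'-10C$ is all you extract from $d(r(t'),r'(t'))<10C$, whereas the stated threshold loses only $\delta\log_2(10C)$, logarithmic in $C$. The comparison constant between $d_a$ and $a^{-(\cdot\mid\cdot)_{x_0}}$ depends on $\delta$ and the visual parameter $a$, not on $C$, so no renormalization converts a linear-in-$C$ deficit into a logarithmic one; the $\delta\log_2(10C)$ term is precisely the content of the exponential divergence estimate, which your argument never uses. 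To prove the lemma with the stated threshold along your lines you would have to add that ingredient (e.g., use divergence or stability of geodesics to show the rays stay $O(\delta)$-close up to time about $t'-\delta\log_2(10C)$, hence $(r\mid r')_{x_0}\gtrsim t'-\delta\log_2(10C)-O(\delta)$), at which point you have essentially reproduced the paper's key step. That said, your weaker threshold would still suffice for the only place the lemma is used (the proof of Theorem \ref{extra} merely needs some explicit lower bound on $r$), so the defect is in matching the statement, not in the viability of the overall strategy.
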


Here $C>\delta$ is a constant we shall explain later.

\begin{proof}
Clearly there exists $t_0>0$ such that $d(r(t_0),r'(t_0))=\delta$. By Proposition 1.25 in \cite{L} we know that for any $t>t_0$, $d(r(t),r'(t))> 2^{\frac{t-t_0-1}{\delta}}$. This means that for $t=t_0+\delta log_2{10C}+1$ we get that $d(r(t),r'(t))>10C$.

We know that $d(r'(t'),r(t'))<10C$ so $t_0\geq t'-\delta log_2{2C}+1\geq log (\epsilon)$. Therefore $d_a(r_1,r_1')=e^{t_0}< \epsilon$.
\end{proof}

We are now ready to prove Theorem \ref{extra}.

\begin{proof}

Let $\epsilon >0$ and $\mathcal{U}_{\epsilon}=\cup_{i\in I}U_i$ be an open cover of $\partial X$. We can then construct a nerve $\mathcal{N(U_{\epsilon})} $ made up of the open sets $\{U_i\}_{i\in I}$ and their corresponding points $\{u_i\}_{i\in I}$. 

We need to show that any loop in $\mathcal{N(U_{\epsilon})}$ can become contractible by making $\mathcal{U_{\epsilon}}$ finer.  
 
Let $L$ be a loop in $\mathcal{N(U)}$ spanned by the rays $\{r_1,...,r_n\}$ where for $1\leq j\leq n$, ${r_j}$ and ${r_{j+1}}$ ($r_{n+1}=r_{1}$) bound a 1-simplex, i.e are within a distance of $\epsilon$ from each other. 

For any $t>0$, the points $\{r_1(t),...,r_n(t)\}$ are part of a loop, call it $L(t)$, in $X$. This is because $X$ is a geodesic space so we can connect any two consecutive points in this set with a geodesic segment to form a loop. 

\begin{lemma}
There exists $r_0\in \mathbb{N}$ such that for every integer $r>r_0$, every loop $L(t)$, $t\in \mathbb{N}$ and $t>r$, is contractible in $X\setminus \overline{B_r}(x_0)$.
\end{lemma}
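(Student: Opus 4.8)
The plan is to combine a purely geometric inward-dip estimate for the loops $L(t)$ with the standing hypothesis $\pi_1^{\infty}(X)=0$, which I read in its pro-trivial form: for every radius $s$ there is a radius $\sigma(s)>s$ such that every loop lying in $X\setminus\overline{B_{\sigma(s)}}(x_0)$ is null-homotopic in $X\setminus\overline{B_s}(x_0)$. The geometry will confine each $L(t)$ to a fixed annular neighbourhood of infinity whose inner radius is set by the mesh $\epsilon$, and pro-triviality will then fill it by a disc missing $\overline{B_r}(x_0)$.

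First I would estimate how far the connecting geodesics of $L(t)$ reach back toward $x_0$. Two consecutive rays $r_j,r_{j+1}$ bound a $1$-simplex, so $d_a(r_j,r_{j+1})<\epsilon$; by the exponential divergence estimate (Proposition~1.25 of \cite{L}) used in the proof of Lemma~\ref{1}, they $\delta$-fellow-travel up to a time $t_0(j)\ge \log(1/\epsilon)-C\delta$ and separate exponentially afterwards. Applying $\delta$-thinness to the triangle $x_0,\,r_j(t),\,r_{j+1}(t)$, every point of the geodesic $[r_j(t),r_{j+1}(t)]$ lies within $\delta$ of the union of the two radial segments, and no radial point below depth $t_0(j)$ is $\delta$-close to it; hence that geodesic stays at distance at least $\min\{t,t_0(j)\}-C\delta$ from $x_0$. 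Minimising over $j$, the whole loop $L(t)$ lies in $X\setminus\overline{B_\rho}(x_0)$ with $\rho:=\min_j t_0(j)-C\delta$, a bound that does not grow with $t$; for $t$ below the fellow-travelling threshold the same computation gives depth essentially $t$.

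Next I would fix $r_0$ and read off the contraction. Given $r>r_0$ chosen small enough that $\sigma(r)\le\rho$, any integer $t>r$ has $L(t)\subset X\setminus\overline{B_\rho}(x_0)\subset X\setminus\overline{B_{\sigma(r)}}(x_0)$ by the dip estimate, so pro-triviality of $\pi_1^{\infty}(X)$ supplies a null-homotopy of $L(t)$ inside $X\setminus\overline{B_r}(x_0)$, which is exactly the claim. If one prefers to contract a single representative and transport the result, I would instead note that the radial sliding homotopy $r_j(t)\mapsto r_j(t')$, filled by the geodesic quadrilaterals on the pairs $(r_j,r_{j+1})$, keeps all $L(t)$ with $t>r$ freely homotopic inside the same annulus $X\setminus\overline{B_\rho}(x_0)$; it then suffices to kill one $L(t^{*})$ with $t^{*}$ just under the fellow-travelling threshold, where $L(t^{*})$ hugs the sphere of radius $t^{*}$ and already lies beyond $\overline{B_{\sigma(r)}}(x_0)$.

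The main obstacle is reconciling the two radius constraints. Because the connecting geodesics bottom out at depth $\approx\log(1/\epsilon)$, for a fixed cover $L(t)$ simply fails to remain outside $\overline{B_r}(x_0)$ once $r$ exceeds this value, so the statement is only meaningful in the window $r_0<r\lesssim\log(1/\epsilon)$; making this window nonempty and compatible with the pro-triviality radius $\sigma(r)$ is the genuine content of choosing $r_0$, and it is exactly why refining the cover (shrinking $\epsilon$, which pushes $\rho$ off to infinity) is forced in the proof of Theorem~\ref{extra}. A secondary, bookkeeping-level difficulty is the $O(\delta)$ boundary layer for $t$ just above $r$, where $L(t)$ may dip a bounded amount below the sphere of radius $r$, together with the basepoint accounting needed to regard $\{L(t)\}$ as a genuine element of $\pi_1^{\infty}(X,w)$; I expect both to be absorbed by enlarging $r_0$ by a fixed multiple of $\delta$ and by sliding basepoints along $w$ as in the definition of the maps $i_j$.
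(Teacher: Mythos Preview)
Your approach is sound and genuinely different from the paper's. The paper argues in two sentences by contradiction: if the lemma failed, then for every $r_0$ there would be some $r>r_0$ with a noncontractible $L(t)$ in $X\setminus\overline{B_r}(x_0)$, and hence (it is simply asserted) the inverse limit $\pi_1^{\infty}(X)$ would be nontrivial. You instead proceed directly, combining the pro-trivial reading of $\pi_1^{\infty}(X)=0$ with a hyperbolic dip estimate that bounds how far the geodesic edges of $L(t)$ fall back toward $x_0$. Your route carries more of the actual geometric content: the observation that the segments $[r_j(t),r_{j+1}(t)]$ bottom out near depth $\log(1/\epsilon)$, so that $L(t)$ does \emph{not} march off to infinity as $t\to\infty$, is exactly why the conclusion can only hold for $r$ in the bounded window you describe, and why the ambient proof of Theorem~\ref{extra} is forced to refine the cover. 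The paper's argument glosses over both this dip phenomenon (its opening line ``loops getting further and further from $x_0$'' is precisely what your estimate shows is false) and the passage from ``a noncontractible loop at every stage'' to ``a nontrivial element of the inverse limit,'' which requires a coherent sequence it never produces. What the paper's version buys is brevity; what yours buys is an explicit $r_0$, an honest accounting of the compatibility between $r$, $\sigma(r)$, and $\epsilon$, and a statement calibrated to what the application actually uses (a single sufficiently large $r$ below the dip threshold, not all $r>r_0$).
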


\begin{proof}
We have a sequence of loops $\{L(1),L(2),...\}$ getting further and further from $x_0$. These loops are not collapsing to a point since rays eventually diverge. If this lemma is false then for every $r_0\in \mathbb{N}$ there exists $r>r_0$ such that $X\setminus \overline{B_r}(x_0)$ is not simply connected. So clearly the limit of the fundamental group of $\{X\setminus \overline{B_1}(x_0),X\setminus \overline{B_2}(x_0),...\}$ is not trivial which is a contradiction.
\end{proof}

Therefore we can find $r-1>\delta log_2(10C)+1+log(\epsilon)$ such that $L(t)$ is contractible in $X\setminus \overline{B_{r-1}}(x_0)$ for any $t>r-1$.

By Lemma 3.1 in \cite{BM} we know that any ball of radius $C$ intersects a geodesic ray from $x_0$. This is where the constant $C$ comes from. Let $R$ be the set of rays that pass within a distance of $C$ from one of the points in the filling of $L(r)$. 

Now let $\delta=C$ and $\phi$ be the map that sends the a point of intersection of a geodesic ray in $R$ with the filling of $L(r)$ to the geodesic ray itself. We need to check that this map satisfies all the conditions of Definition \ref{weird}. The first condition is satisfied by Lemma 3.1 in \cite{BM}. The second condition is satisfied by Lemma \ref{1}. It is clear that the third condition is satisfied. Therefore by Lemma \ref{subdivision} we know that $L$ is fillable and so the first Cech homology of $\partial X$ is trivial.
\end{proof}
\section{Homology at Infinity and Steenrod Homology}
\label{SH}
In this section we will prove the following theorem.

\begin{theorem}
\label{2}
Let $G$ be a hyperbolic group and let $k$ be a finite field. If $G$ has trivial $i^{th}$ homology over $\textit{k}$ at infinity then $\partial G$ has trivial $i^{th}$ Steenrod homology over $k$. 
\end{theorem}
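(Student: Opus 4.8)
The plan is to route the $i^{th}$ homology at infinity of $G$ through the $i^{th}$ Čech homology of $\partial G$, and then to upgrade from Čech homology to Steenrod homology using the fact that the coefficient field is finite. Fix a Rips complex $X=P_d(G)$ for $d$ large enough; since $G$ is hyperbolic this complex is contractible, locally finite, and $G$ acts on it geometrically with $\partial X=\partial G$. In analogy with the definition of $\pi_1^\infty$ given above, write $H_i^\infty(X;k)=\lim_n H_i\bigl(X\setminus\overline{B}(n);k\bigr)$ for the $i^{th}$ homology at infinity, the inverse limit being taken over the maps induced by the inclusions $X\setminus\overline{B}(n+1)\hookrightarrow X\setminus\overline{B}(n)$ (note that closed balls are compact since $X$ is locally finite). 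The hypothesis is that $H_i^\infty(X;k)=0$, and we must deduce $H^{st}_i(\partial G;k)=0$.

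First I would reduce Steenrod homology to Čech homology. Since $\partial G$ is compact metric, it can be written as an inverse limit $\lim_j N(\mathcal U_j)$ of nerves of a cofinal sequence of \emph{finite} open covers $\mathcal U_j$ with $\mathrm{mesh}(\mathcal U_j)\to 0$. Milnor's characterization of Steenrod homology (as in \cite{remarks on steenrod}) provides the short exact sequence
$$0\longrightarrow {\lim_j}^{1}\,H_{i+1}\bigl(N(\mathcal U_j);k\bigr)\longrightarrow H^{st}_i(\partial G;k)\longrightarrow \lim_j H_i\bigl(N(\mathcal U_j);k\bigr)\longrightarrow 0.$$
Because each $N(\mathcal U_j)$ is a finite complex and $k$ is a finite field, each $H_{i+1}(N(\mathcal U_j);k)$ is a finite group, so the inverse system satisfies the Mittag--Leffler condition and the $\lim^1$ term vanishes. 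Hence $H^{st}_i(\partial G;k)\cong\lim_j H_i(N(\mathcal U_j);k)=:\check H_i(\partial G;k)$, the $i^{th}$ Čech homology, and it suffices to prove $\check H_i(\partial G;k)=0$.

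Next I would identify $\check H_i(\partial G;k)$ with $H_i^\infty(X;k)$ by choosing the covers to be \emph{shadow covers} coming from the spheres of $X$. For each vertex $v$ of $X$ with $d(v,x_0)=n$, let $U_v\subset\partial G$ be the set of endpoints of geodesic rays from $x_0$ passing within distance $C$ of $v$, where $C$ is the constant from Lemma 3.1 of \cite{BM}, so that $\mathcal U_n=\{U_v: d(v,x_0)=n\}$ is an open cover of $\partial G$. By $\delta$-hyperbolicity — concretely by Lemma \ref{1} — the mesh of $\mathcal U_n$ in the visual metric tends to $0$, so $\{\mathcal U_n\}_{n}$ is cofinal among all covers, and the same divergence estimates control the overlaps of the $U_v$, so that $N(\mathcal U_n)$ is homotopy equivalent to a neighborhood of the sphere $S_n(x_0)$ and hence, via the coarse nearest-point retraction that $\delta$-hyperbolicity supplies, to $X\setminus\overline{B}(n)$. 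Crucially, this equivalence can be arranged to intertwine the simplicial refinement map $N(\mathcal U_{n+1})\to N(\mathcal U_n)$ with the inclusion $X\setminus\overline{B}(n+1)\hookrightarrow X\setminus\overline{B}(n)$ up to homotopy. Applying $H_i(-;k)$ and passing to inverse limits gives $\check H_i(\partial G;k)\cong\lim_n H_i(X\setminus\overline{B}(n);k)=H_i^\infty(X;k)=0$, and combining with the previous step, $H^{st}_i(\partial G;k)=0$.

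The main obstacle is the construction in the last paragraph: producing the homology equivalence — really a pro-homotopy equivalence — between the shadow-cover nerves and the complements of balls, done uniformly enough in $n$ to commute with the bonding maps. This is the homological counterpart of the Bestvina--Mess argument that $\overline{X}=X\cup\partial X$ is a $\mathcal Z$-set compactification, and it is where essentially all of the $\delta$-hyperbolic geometry (uniform exponential divergence of rays, existence of coarse nearest-point projections from $X\setminus\overline{B}(n)$ to $S_n(x_0)$) is consumed. By contrast, the passage from Steenrod to Čech homology via finiteness of $k$, and the cofinality of the shadow covers, are comparatively routine once the geometric comparison is in place.
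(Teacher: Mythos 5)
Your route is genuinely different from the paper's, and its first half is sound: reducing Steenrod homology to \v{C}ech homology via Milnor's $\lim^1$ exact sequence, with finiteness of $k$ forcing the Mittag--Leffler condition, is correct and is an attractive alternative use of the finite-field hypothesis. The paper instead invokes Bestvina's isomorphism $H^{st}_i(\partial G)\cong H^{LF}_{i+1}(P(G))$ (Theorem \ref{steenrod}, Proposition 1.5 of \cite{BM}) and then runs a chain-level argument in locally finite homology, where finiteness of $k$ enters through a diagonal extraction of subsequences of fillings with eventually constant coefficients. Note also that you only use the vanishing of the inverse limit $\lim_n H_i(X\setminus\overline{B}(n);k)$, whereas the paper's proof uses the stronger pro-triviality statement (for every $n$ there is $m$ so that $i$-cycles outside $\overline{B}(m)$ bound outside $\overline{B}(n+t)$); your weaker reading suffices for your argument.

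The genuine gap is the step you yourself flag as ``the main obstacle'': the asserted pro-homotopy (or even pro-homology) equivalence between the system of shadow-cover nerves $\{N(\mathcal{U}_n)\}$ and the system $\{X\setminus\overline{B}(n)\}$, compatibly with the bonding maps. As written this is not proved, and the sketch does not go through in the stated form: shadow covers of $\partial G$ are not good covers (the sets $U_v$ and their intersections need not be contractible or even connected), so no nerve lemma identifies $N(\mathcal{U}_n)$ with a neighborhood of $S_n(x_0)$; the ``coarse nearest-point retraction'' is only a coarse map, and promoting it to continuous maps inducing isomorphisms on $H_i$, uniformly in $n$ and commuting with the refinement maps $N(\mathcal{U}_{n+1})\to N(\mathcal{U}_n)$ (which are themselves only defined up to homotopy), is exactly the hard content. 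Lemma \ref{1} and Lemma 3.1 of \cite{BM} give mesh and covering estimates, not this equivalence. What you actually need is the Bestvina--Mess package --- that $X\cup\partial G$ is an AR with $\partial G$ a $\mathcal{Z}$-set, so that $\partial G$ has the shape of the end system $\{X\setminus\overline{B}(n)\}$ --- which is precisely the geometric input the paper black-boxes through Theorem \ref{steenrod}. If you cite that machinery to fill this step, your Milnor-sequence argument becomes a clean, arguably more conceptual alternative to the paper's filling-and-diagonalization proof; without it, the central identification is unproved.
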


\begin{defn}
We define the Rips complex $P_t(G)$ of $G$ to be the simplical complex whose vertices are elements of $G$ and any set of vertices $v_1,...,v_n\in G$ span an $n$-simplex if $d(v_i,v_j)<t$ for all $1\leq i,j\leq n$. 
\end{defn}

The useful thing about the Rips complex is that for large enough $t$ , $P_t(G)$ becomes contractible. Set $P(G)=P_t(G)$ for $t$ large enough.

The action of $G$ on itself by left translation can be extended to an action on $P(G)$. Moreover, its is easy to show that $G$ acts geometrically on the Rips complex since it acts geometrically on itself.  Therefore the assumption in the theorem tells us that $H^{\infty}(P(G))=0$. (More details about the Rips complex can be found in Section 9.2 in \cite{gg}).

Now define $B(n)$ to be the subset of $P(G)$ that contains every vertex $v\in G$ that satisfies $d(v,e)<n$, where $e$ is the identity of $G$, and every simplex whose vertices are in $B(n)$. Similarly define $S(n)$ to be the subset of $P(G)$ that contains every vertex $v\in G$ that satisfies $d(v,e)=n$ and every simplex whose vertices are in $S(n)$. Finally, define $\overline{B}(n)$ to be the subset that contains every vertex $v\in G$ that satisfies $d(v,e)\leq n$ and every simplex whose vertices are in $\overline{B}(n)$. We are now ready to prove Theorem \ref{2}.

\begin{proof}
We will begin the proof with the following theorem by M. Bestvina (Proposition 1.5 in \cite{BM}). 

\begin{theorem}
\label{steenrod}
Let $G$ be a hyperbolic group and let $R$ be a ring. Then there is an isomorphism of $RG$-modules
$$H^{st}_i(\partial G)\cong H_{i+1}^{LF}(P(G))$$
\end{theorem}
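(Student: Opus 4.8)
The plan is to present $\partial G$ as a $Z$-set in a natural compactification of the contractible complex $P(G)$ and then extract the isomorphism from the long exact sequence of the resulting pair in Steenrod homology; the degree shift $i\mapsto i+1$ will come for free from the connecting map in that sequence. An alternative, dual route runs through the \v{C}ech cohomology of $\partial G$, a mapping telescope of nerves of shadow covers, and Steenrod--Sitnikov duality, but the homological version is cleaner because it never dualizes.

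First I would fix the compactification. Since $G$ is hyperbolic and acts geometrically on the contractible, locally finite, finite-dimensional complex $P(G)$, the Gromov compactification $\overline{X}:=P(G)\cup\partial G$ is compact metrizable, $P(G)$ is open and dense in it, $\partial G$ is a $Z$-set, and $\overline{X}$ is an absolute retract (it is an ANR and contractible). This is the Bestvina--Mess picture, and it is where hyperbolicity is genuinely used --- in particular that $\partial G$ has finite covering dimension and that $\overline{X}$ is an ANR; see \cite{BM}. The action of $G$ on $P(G)$ extends continuously to $\overline{X}$ and preserves $\partial G$, so everything constructed below carries a compatible $G$-action.

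Next I would use two standard properties of Steenrod(--Sitnikov) homology over the coefficient ring $R$. Being a homology theory on locally compact metric pairs that satisfies exactness, it gives the long exact sequence of $(\overline{X},\partial G)$:
\[
\cdots \to H^{st}_{n}(\overline{X}) \to H^{st}_{n}(\overline{X},\partial G) \to H^{st}_{n-1}(\partial G) \to H^{st}_{n-1}(\overline{X}) \to \cdots.
\]
On an AR it agrees with singular homology, so $H^{st}_n(\overline{X})=0$ for $n\geq 1$. And because $\partial G$ is a $Z$-set with $P(G)=\overline{X}\setminus\partial G$ open, the relative group is the locally finite homology of the complement, $H^{st}_n(\overline{X},\partial G)\cong H^{LF}_n(P(G))$ (an excision statement for $Z$-sets, modelled on $(D^m,S^{m-1})$ versus $\mathbb{R}^m$). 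Substituting both into the exact sequence with $n=i+1\geq 2$ squeezes an isomorphism between two vanishing groups:
\[
H^{LF}_{i+1}(P(G)) \;\cong\; H^{st}_{i+1}(\overline{X},\partial G) \;\cong\; H^{st}_{i}(\partial G),
\]
and naturality of all the maps makes it an isomorphism of $RG$-modules. The remaining low-degree case $i=0$, together with the reduced-versus-unreduced and number-of-ends bookkeeping, I would settle from the tail of the same sequence using $H^{st}_0(\overline{X})\cong R$.

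The main obstacle is the geometric input of the first step: establishing that $\overline{X}$ is an AR and $\partial G$ a $Z$-set in it. This is the substantive content (Bestvina--Mess) and the only place the full force of hyperbolicity is needed. The second point to pin down carefully is the $Z$-set excision $H^{st}_n(\overline{X},\partial G)\cong H^{LF}_n(P(G))$: one must fix the precise variant of Steenrod homology and verify that a $Z$-set has a mapping-cylinder/collar-type neighborhood, so that $\partial G$ is tautly embedded and the pair sequence really computes the locally finite homology of the open part. Because the argument stays entirely inside the exact theory $H^{st}_*$ and performs no dualization, no $\mathrm{Ext}$ or $\varprojlim^1$ correction terms enter and $R$ can be an arbitrary ring; those subtleties are already absorbed into the definition of Steenrod homology.
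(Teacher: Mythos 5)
The paper does not actually prove this statement: it is quoted verbatim inside the proof of Theorem \ref{2} as a result of Bestvina--Mess (cited there as Proposition 1.5 of \cite{BM}), so there is no in-paper argument to compare against. Your proposal is, in substance, a reconstruction of the Bestvina--Mess proof itself: compactify the Rips complex by the Gromov boundary, use that $\overline{X}=P(G)\cup\partial G$ is a compact AR with $\partial G$ a $Z$-set, and read the isomorphism off the long exact sequence of the pair in Steenrod--Sitnikov homology, using vanishing of $\tilde H^{st}_*(\overline{X})$ and the identification $H^{st}_n(\overline{X},\partial G)\cong H^{LF}_n(P(G))$. That outline is correct and is the standard route. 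Two points deserve to be pinned down rather than gestured at: first, the identification of the relative group with locally finite homology of the open complement is the strong-excision (map-excision) property of Steenrod--Sitnikov homology for closed subsets of compact metric pairs, so it holds for $\partial G\subset\overline{X}$ without any collar or mapping-cylinder neighborhood; the $Z$-set condition is what you need for the ANR/contractibility of $\overline{X}$ (via Bestvina--Mess's criterion, using that $P(G)$ is contractible, locally finite and finite-dimensional for $t$ large), not for the excision step. Second, the degree-$0$ case requires the reduced sequence ($\tilde H^{st}_0(\overline{X})=0$), and the $RG$-module statement needs the observation that the $G$-action on $P(G)$ extends to $\overline{X}$ preserving $\partial G$ and that all maps in the sequence are equivariant, which you note. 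With those details filled in, your argument is a complete proof of the cited result, whereas the paper simply imports it.
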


Therefore it is sufficient to prove that $H_{i+1}^{LF}(P(G))=0$. 

Let $c\in C_{i+1}^{LF}(P(G))$ be an $(i+1)$-cycle. This means $c$ is a sum of simplices so we can write $c=\sum_{s\in S^{i+1}}\alpha(s)s$ where $S^{i+1}$ is the set of all $(i+1)$-simplices in $P(G)$ and $\alpha$ is a function that sends a simplex to its coefficient in $c$. We need to show that $c$ is an $(i+1)$-boundary i.e. it bounds an $(i+2)$-chain. It is well known that a contractible space has trivial simplical homology, therefore if $c$ is a finite sum of simplices then $c$ is a boundary. 

So assume $c$ is an infinite sum of simplices. We define $c\ \cap B(j)=\sum_{s\in S^{i+1}}\mathfrak{i}(s)\alpha(s)s$ where $\mathfrak{i}$ is the function that sends a simplex $s$ to 1 if every vertex of $s$ is in $B(j)$ and 0 otherwise. Similarly for $\overline{B}(j)$ and $S(j)$. Let $n\in \mathbb{N}$ be large enough so that the intersection $c\cap B(n)$ is non-empty. 

\textbf{Claim:} The boundary of $c\cap \overline{B}(n)$ is an $i$-cycle.

By definition, $c\cap \overline{B}(n)$ is a sum of $(i+1)$-simplices in $c$ whose vertices are within a distance of $n$ from the identity. The boundary of $(i+1)$-simplices are $i$-boundaries and thus $i$-cycles. 

It is important to note that these cycles may not lie on $S(n)$. However they have to be a distance of at most $t$ from $S(n)$, where $t$ is the constant we defined when discussing the Rips complex.

Since the $i^{th}$ homology at infinity is trivial we get that for any $n>0$ there exists $m>n+t$ such that every $i$-cycle in $P(G)\setminus \overline{B}(m)$ is will bound an $(i+1)$-chain in $P(G)\setminus \overline{B}(n+t)$. Denote the sum of the $(i+1)$-chains that bound $c\cap\overline{B}(m)$ by $c(m)$. It is important to note that $c(m)$ is outside the ball $B(n+t)$ so it cannot intersect to $(c\cap \overline{B}(n-1))$. The constant $m$ depends on $n$ so write $m=m(n)$.

Consider the finite sum of $(i+1)$-simplices, $c_n=c(m(n))-(c\cap \overline{B}(m(n)))$. It is clear that this is an $(i+1)$-cycle since the boundary of $c(m)$ is equal to the boundary of $c\cap \overline{B}(m)$ so they cancel out. As it is a finite cycle it must be a boundary. Let $b_n$ be an $(i+2)$-cycle such that $\partial_{i+2} b_n=c_n$ where $\partial_{i+2}$ is the $(i+2)$ boundary map of the homology groups. In a similar fashion as $c$, we write $b_j=\sum_{s\in S^{i+2}}\beta_j(s)s$.

So we now have a sequence of 'fillings' that fill larger and larger 'sections' of $c$. We want to show that the limit of these filling will fill $c$. However, $lim_{j\rightarrow \infty} b_j$ is not necessarily well defined. This is because the coefficient of any $(i+2)$-simplex may continuously fluctuate in the sequence $\{b_j\}$ and never converge. However there is a way around this problem. 

Assume $s'$ is an $(i+2)$-simplex in $B(1)$. Since $k$ is finite, any sequence of coefficients $\{\beta_j(s')\}$ must contain a subsequence that is constant. Correspondingly, let $\{b_{j'}\}$ be a subsequence of $\{b_j\}$ where the coefficient of $s'$ are constant. Now take another simplex $s''$ in $B(1)$. We can repeat this process and find a subsequence of $\{b_{j'}\}$ such that the coefficient of $s''$ is constant. Since $P(G)$ is locally finite there are finitely many $(i+2)$-simplices in $B(1)$. Therefore after repeating this process finitely many times we get a subsequence of $\{b_{j}\}$ such that the coefficients of any $(i+2)$-simplex in $B(1)$ is constant. Call this subsequence $\{b_{j_1}\}$. 

Now replace $\{b_j\}$ with $\{b_{j_1}\}$ and repeat this process on $B(2)$. We will get a subsequence of $\{b_{j_1}\}$ such that the coefficients of any $(i+1)$-simplex in $B(2)$ are constant. Denote this subsequence by $\{b_{j_2}\}$. Now consider the following sequence 
$$\{b_{j_j}\}=\{b_{1,1},b_{2,2},...\}$$

\textbf{Claim:} The limit of this sequence is well defined.

Let $s$ be an arbitrary $(i+2)$-simplex. Then there exists $n\in \mathbb{N}$ such that $s\in B(n)$. Note that the sequence $\{b_{j_j}\}_{j\geq n}$ is a subsequence of $\{b_{j_n}\}$ which means that the coefficient of $s$ is constant in $\{b_{j_j}\}_{j\geq n}$. This means that the limit of the coefficients of $s$ is well defined. In other words, the limit of the coefficient of any $(i+2)$-simplex is well defined. Hence the limit of $\{b_{j_j}\}$ is well defined and we denote this limit as $b$.

\textbf{Claim:} $\partial_{i+2} b=c$. 

Assume this is not true. Then for some $(i+1)$-simplex $s^{i+1}$, the coefficient of $s^{i+1}$ in $c$ is not equal to the coefficient of $s^{i+1}$ in $\partial_{i+2} b$. Let $n\in \mathbb{N}$ be such that $s^{i+1}\in B(n-2)$ and let $m$ be an arbitrary integer larger than $n+t$.  Recall that $\partial_{i+2} b_{m_m}=c(m_m)- (c\ \cap \overline{B}(m_m))$. Also recall that the simplices in $c(m_m)$ are outside the ball $B(m_m)$ so they do not contribute to the coefficient of $s^{i+1}$ in $\partial_{i+2}b_{m_m}$. In other words the coefficient of $s^{i+1}$ in $\partial_{i+2} b_{m_m}$ is equal to $\alpha(s^{i+1})$. As this is true for any $m>n+t$ it must also be true for the limit as well. 

Therefore every $(i+1)$-cycle is a boundary and thus $H^{LF}_{i+1}(P(G))=0$. By Theorem \ref{steenrod} we get that $H^{st}_i(\partial G)=0$ and we are done.
\end{proof}

The following is a direct Corollary of this theorem and Lemma \ref{relator}.

\begin{cor}
If a hyperbolic group $G$ has vanishing first cohomology at infinity over $\mathbb{Z}/2$ then $H^{st}_1(\partial G)=0$.
\end{cor}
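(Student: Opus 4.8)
The plan is to derive this from Theorem \ref{2} in the special case $i=1$, $k=\mathbb{Z}/2$, with Lemma \ref{relator} serving as the bridge that converts the hypothesis about \emph{cohomology} at infinity into the hypothesis about \emph{homology} at infinity that Theorem \ref{2} actually consumes. As in the proof of Theorem \ref{2}, I would work on the Rips complex $P(G)$, on which $G$ acts geometrically, so that ``vanishing first cohomology at infinity over $\mathbb{Z}/2$'' means that the direct system
$$\big\{\, H^1\!\big(P(G)\setminus \overline{B}(n);\ \mathbb{Z}/2\big)\,\big\}_{n\in\mathbb{N}},$$
with restriction maps induced by the inclusions $P(G)\setminus\overline{B}(m)\hookrightarrow P(G)\setminus\overline{B}(n)$ for $m>n$, is ind-trivial (every class eventually restricts to $0$), while ``trivial first homology at infinity over $\mathbb{Z}/2$'' means that the inverse system $\{H_1(P(G)\setminus\overline{B}(n);\mathbb{Z}/2)\}_{n}$ with the corresponding pushforward maps is pro-trivial — this being exactly the condition invoked inside the proof of Theorem \ref{2}.

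First I would invoke Lemma \ref{relator}: over a field the universal coefficient theorem gives a natural isomorphism $H^1(Y;\mathbb{Z}/2)\cong \operatorname{Hom}_{\mathbb{Z}/2}\!\big(H_1(Y;\mathbb{Z}/2),\mathbb{Z}/2\big)$, so the cohomology-at-infinity direct system above is canonically the linear dual of the homology-at-infinity inverse system, with each restriction map being the transpose of the corresponding pushforward map. Since a linear map between $\mathbb{Z}/2$-vector spaces is zero precisely when its transpose is zero, ind-triviality of the cohomology system is equivalent to pro-triviality of the homology system. Hence the hypothesis ``$G$ has vanishing first cohomology at infinity over $\mathbb{Z}/2$'' yields ``$G$ has trivial first homology over $\mathbb{Z}/2$ at infinity.''

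Then I would simply apply Theorem \ref{2} with $i=1$ and the finite field $k=\mathbb{Z}/2$: it outputs that $\partial G$ has trivial first Steenrod homology over $\mathbb{Z}/2$, i.e. $H^{st}_1(\partial G)=0$, which is the assertion.

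The only genuine content sits in Lemma \ref{relator}, and the subtle point there is \emph{strength}: Theorem \ref{2} does not merely use $\varprojlim H_1=0$ but the stronger pro-triviality (``for every $n$ there is $m$ with $H_1(P(G)\setminus\overline{B}(m))\to H_1(P(G)\setminus\overline{B}(n))$ the zero map''), so Lemma \ref{relator} must be formulated at that level, and one must check that the cohomology hypothesis is the ind-trivial version rather than just ``$\varinjlim H^1=0$ with $\lim^1$ corrections'' — over a field these agree, but the bookkeeping should be made explicit. A secondary caveat worth flagging is the coefficient ring: $H^{st}_1(\partial G)$ in the statement is understood over $\mathbb{Z}/2$, matching Theorem \ref{2}, and no claim is made (or needed here) over $\mathbb{Z}$.
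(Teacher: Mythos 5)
Your proposal is correct and follows essentially the route the paper intends: the paper offers no argument beyond declaring the corollary a direct consequence of Theorem \ref{2} (with $i=1$, $k=\mathbb{Z}/2$) together with an auxiliary lemma converting vanishing cohomology at infinity into vanishing homology at infinity --- a lemma that is cited but never actually stated or proved in the paper. Your field-coefficient duality argument (cohomology at infinity as the linear dual of the homology-at-infinity system, with a $\mathbb{Z}/2$-linear map vanishing iff its transpose does), formulated at the pro-triviality strength that the proof of Theorem \ref{2} really uses, supplies exactly the content of that missing bridge.
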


This implies that the open problem Conjecture \ref{open problem} is implied by Conjecture \ref{mines}. 
\newpage
 
\end{document}